\def\N{{\mathbb{N}}}
\def\R{{\mathbb{R}}}
\def\D{{\mathbb{D}}}
\theoremstyle{plain}
\newtheorem{theorem}{Theorem}
\newtheorem{proposition}{Proposition}
\newtheorem{definition}{Definition}
\newtheorem{lemma}{Lemma} 
\newtheorem{corollary}{Corollary}
\theoremstyle{remark}
\newtheorem{Rem}{Remark}
\newtheorem{example}{Examples}
\title[Probabilistic $1$-Lipschitz space ]{The space of probabilistic $1$-Lipschitz maps}
\author{Mohammed Bachir}
\begin{document}

\date{16/01/217} 
\subjclass{46S50, 54E70, 47S50}

\address{Laboratoire SAMM 4543, Universit\'e Paris 1 Panth\'eon-Sorbonne\\
Centre P.M.F. 90 rue Tolbiac\\
75634 Paris cedex 13\\
France}

\email{Mohammed.Bachir@univ-paris1.fr}
\begin{abstract}
We introduce and study the natural notion of probabilistic $1$-Lipschitz maps. We use the space of all probabilistic $1$-Lipschitz maps to give a new method for the construction of probabilistic metric completion (respectively of probabilistic invariant metric group completion). Our construction is of independent interest. We prove that the space of all probabilistic $1$-Lipschitz maps defined on a probabilistic invariant metric group can be endowed with a semigroup structure. Then, we caracterize the probabilistic invariant complete Menger groups by the space of all probabilistic $1$-Lipschitz maps in the sprit of the classical Banach-Stone theorem.

\end{abstract}
\maketitle
{\bf Keywords:} Probabilistic metric space; Probabilistic $1$-Lipschitz map; Probabilistic Banach-Stone type theorem and Isometries.
\vskip5mm
{\bf msc:} 46S50, 47S50, 54E70.

\section{\bf Introduction}
The general concept of probabilistic metric spaces was introduced by K. Menger, who dealt with probabilistic geometry \cite{M1}, \cite{M2}, \cite{M3}. The decisive influence on the development of the theory of probabilistic metric spaces is due to B. Schweizer and A. Sklar and their coworkers in several papers \cite{SS0}, \cite{SS1}, \cite{SS2}, \cite{SS3}, see also \cite{SHE}, \cite{SHE1} and \cite{HP}. For more informations about this theory we refeer to the excellent monograph \cite{SS}. 
\vskip5mm
The purpose of this paper is to make a new contributions to this theory. We introduce, in a natural way, the concept of probabilistic Lipschitz maps defined from a probabilistic metric space $G$ into the set of all distribution functions that vanish at $0$, classically denoted by $\Delta^+$. We study the properties of the space of all probabilistic $1$-Lipschitz maps denoted by $Lip^1_\star(G,\Delta^+)$ in the paper.  As in the classical determinist analysis, the space of probabilistic $1$-Lipschitz maps will play important role. To motivate the interest of this notion, we mention as example some results obtained about the completions of probabilistic metric spaces. Indeed, the well known result of H. Sherwood in \cite{SHE1}, shows that every probabilistic metric space with continuous triangle function, has a completion. To obtain this result, H. Sherwood used the method of the quotient space by Cauchy sequences. As consequence of our study, we give a new method of constructing of the completions of probabilistic metric spaces by using the concept of the probabilistic $1$-Lipschitz maps instead of the quotient space by Cauchy sequences (see Theorem \ref{thmc} and Corollary \ref{corcomp}). Our method allows at the same time to obtain the completion of a probabilistic invariant metric group (Corollary \ref{corcomp}), thus completing the result of H. Sherwood, just mentioned, in the group framework (in the deterministic case, the group completion for invariant metric groups is a known result of V.L. Klee \cite{K}). We mention here that our method is of independent interest and will be used for proving a probabilistic Banach-Stone type theorem.
\vskip5mm
We will start our study with relatively basic property by giving, for example, a probabilistic analogue of the classical result of E. Mac Shane \cite{MS} on the extension of Lipschitz maps from a subset to the whole space (Theorem \ref{MC}). We prove that the space $Lip^1_\star(G,\Delta^+)$ has also a probabilistic metric structure in which the probabilistic metric space $G$ embeds isometrically . In the case where $G$ is endowed with a group structure, we prove that we can also equip the space $Lip^1_\star(G,\Delta^+)$ with a natural "{\it Sup-Convolution}" operation denoted by $\odot$ so that $(Lip^1_\star(G,\Delta^+),\odot)$ become a probabilistic metric monoid. Then, we prove in a general setting our main results, which implies that the probabilistic invariant complete metric group structure of $G$ is completely determined by the probabilistic metric monoid structure of $(Lip^1_\star(G,\Delta^+),\odot)$. Our results apply in particular to probabilistic invariant complete Menger groups (Corollary \ref{corollaryend}). This gives a probabilistic version of the well known Banach-Stone theorem and extend some results obtained recently by the author for classical invariant metric groups in \cite{Ba1}, \cite{Ba2} and \cite{Ba3}. Recall that the classical Banach-Stone theorem asserts that a compact Hausdorff space $K$ is completely determined by the Banach structure of the space $(C(K), \|.\|_{\infty})$ of all real-valued continuous functions equipped with the sup-norm. In other words, two Hausdorff compact spaces  $K$ and $L$ are homeomorphic if and only if $(C(K), \|.\|_{\infty})$ and $(C(L), \|.\|_{\infty})$ are isometrically isomorphic as Banach spaces. Finally, note that several authors was interested on the extention of classical results obtained in metric spaces to the general framework of probabilistic metric spaces. For example, the theory of fixed points has several extensions in the setting of probabilistic metric spaces (see \cite{HP} and \cite{HP1}). Also, the classical result of Mazur-Ulam on isometries in normed vector spaces has been extended to the probabilistic normed spaces (see for instance \cite{C1}, \cite{C2}).

\vskip5mm

This paper is organized as follows. In Section \ref{S1}, we recall some classical and known notions about probabilistic metric spaces. In Section \ref{S2} we introduce the concept of probabilistic Lipschitz maps and probabilistic sup-convolution, we then deal with some basic properties. Section \ref{S3} and Section \ref{S4} contains the main results of the paper. In Section \ref{S3}, we give our first main results Theorem \ref{thmc}, Theorem \ref{corc} and Corollary \ref{corcomp}. On one hand, these results will permit to give a new construction of the completions of probabilistic metric spaces and probabilistic invariant metric groups. On the other hand, they will play an important role in Section \ref{S4}. In Section \ref{S4}, we give our second main results (Theorem \ref{theorem2}, Theorem \ref{Ba-Sto} and Corollary \ref{corollaryend}), which will permit to state a probabilistic Banach-Stone type theorem.

\section{\bf Classical Notions of Probabilistic Metric Spaces.} \label{S1}
In this section, we recall somme general well know definitions and concepts about probabilistic metric spaces. All these concepts can be found in \cite{HP} and \cite{SS}. 
\vskip5mm
A distribution function is a function $F : [-\infty, +\infty] \longrightarrow [0, 1]$ which is nondecreasing
and left-continuous with $F(-\infty) = 0$; $F(+\infty) = 1$. The set of all distribution functions such that
$F(0) = 0$ will be denoted by $\Delta^+$. For $F, G \in \Delta^+$, the relation $F \leq G$ is meant
by $F(t)\leq G(t)$, for all $t\in \R$. For all $a\in \R$, the distribution $\mathcal{H}_a$ is defined as follow
\[ \mathcal{H}_a(t)=
\left\{
\begin{array}{rl}
0& \textnormal{ for all } t\leq a \\
1& \textnormal{ for all } t>a
\end{array}
\right.
\] 
For $a=+\infty$, 
\[ \mathcal{H}_{\infty}(t)=
\left\{
\begin{array}{rl}
0& \textnormal{ for all } t\in [-\infty, +\infty[ \\
1& \textnormal{ for all } t=+\infty
\end{array}
\right.
\] 
It is well known that $(\Delta^+, \leq)$ is a complete lattice with a minimal element $\mathcal{H}_{\infty}$ and the maximal element $\mathcal{H}_0$. Thus, for any nonempty set $I$ and any familly $(F_i)_{i\in I}$ of distributions in $\Delta^+$, the function $F=\sup_{i\in I} F_i$ is also an element of $\Delta^+$. 

\begin{definition} \label{axiom1} (see \cite{SS}, \cite{HP}) A triangle function $\tau$ is a binary operation on $\Delta^+$ that is commutative,
associative, and non-decreasing in each place, and has $\mathcal{H}_0$ as identity. In other words, $\tau$ satisfies the axioms $(i)$-$(v)$: 

$(i)$ $\tau(F, L) \in \Delta^+$ for all $F, L \in \Delta^+$.

$(ii)$ $\tau(F, L)=\tau(L, F)$ for all $F, L \in \Delta^+$.

$(iii)$ $\tau(F,\tau(L, K))=\tau(\tau(F, L), K)$, for all $F,L,K\in \Delta^+$.

$(iv)$ $\tau(F, \mathcal{H}_0)=F$ for all $F\in \Delta^+$.

$(v)$ $F\leq L \Longrightarrow \tau(F, K) \leq \tau(L, K)$ for all $F, L, K\in \Delta^+$.
\end{definition}
\begin{definition} A tiangle function $\tau$ is  said to be sup-continuous (see for instance \cite{C3}) if for all nonempty set $I$ and all familly $(F_i)_{i\in I}$ of distributions in $\Delta^+$ and all $L\in\Delta^+$, we have $$\sup_{i\in I} \tau(F_i, L)=\tau(\sup_{i\in I}(F_i),L).$$
\end{definition}
\vskip5mm
For simplicity of notations, in what follows, the triangle function $\tau$ will be denoted by the binary operation $\star$ as follows:
$$\tau(L,K):=L\star K.$$
It follows from the axioms $(i)$-$(v)$ that $(\Delta^+,\star)$ is an abelian monoid having $\mathcal{H}_0$ as identity element.

\begin{definition} \label{contin} Let $\star$ be a triangle function on $\Delta^+$.

$(1)$ A sequence $(F_n)$ of distributions in $\Delta^+$ converges weakly to a function $F$ in $\Delta^+$ if $(F_n(t))$ converges to $F(t)$ at each point $t$ of continuity of $F$. In this case, we write indifferently $F_n \,{\xrightarrow {\textnormal{w}}}\, F$ or $\lim_n F_n =F$.

$(2)$ We say that the law $\star$ is continuous at $(F,L)\in \Delta^+\times \Delta^+$ if we have $F_n\star L_n \,{\xrightarrow {\textnormal{w}}}\, F\star L$, whenever $F_n \,{\xrightarrow {\textnormal{w}}}\, F$ and $L_n \,{\xrightarrow {\textnormal{w}}}\, L$. 
\end{definition}

\begin{definition} (see \cite{SS}, \cite{HP}) A triangular norm ($t$-norm for short) is a binary operation on the unit interval $[0,1]$, that is, a function $T : [0,1]\times [0,1] \longrightarrow [0,1]$ such that for all $x, y, z \in [0,1]$ the following four axioms are satisfied:

$(T1)$ $T(x,y) = T(y,x)$ ( commutativity);

$(T2)$ $T(x,T(y,z)) = T(T(x,y),z)$ (associativity);

$(T3)$ $T(x,y) \leq T(x,z)$ whenever $y\leq z$ (monotonicity );

$(T4)$ $T(x, 1) = x$ (boundary condition).
\end{definition}
Several basic examples of left-continuous $t$-norm are given in \cite{HP} and \cite{SS}. We give below a classical continuous triangle functions.

\begin{example}\label{example1} (see \cite{HP} and \cite{SS}) Let $T: [0,1]\times [0,1] \longrightarrow [0,1]$ be a left-continuous $t$-norm, then the operation $\star_T$ defined for all $F, L\in \Delta^+$ and for all $t\in \R$ by 
\begin{eqnarray}\label{eq0}
(F\star_T L)(t):=\sup_{s+u=t} T(F(s),L(u))
\end{eqnarray}
satisfies the axioms $(i)$-$(v)$ and is both continuous and sup-continuous. 
\end{example}

\begin{definition} \label{definition.M} (see Example \ref{example.1} below) Let $G$ be a set and let $D : G\times G \longrightarrow (\Delta^+,\star, \leq)$ be a map. We say that $(G, D, \star)$ is a probabilistic metric space if the following axioms $(i)$-$(iii)$ hold:

$(i)$ $D(p,q)=\mathcal{H}_0$ iff $p=q$.

$(ii)$ $D(p,q)=D(q,p)$ for all $p,q\in G$ 

$(iii)$ $D(p,q)\star D(q,r)\leq D(p,r)$ for all $p,q, r\in G$ 

If moreover the set $(G,\cdot)$ is a monoid, then we say that $(G,\cdot, D, \star)$ is a probabilistic metric monoide and if $(G,\cdot)$ is group and satisfies the following additional axiom, 

$(iv)$ $D(pr,qr)=D(rp,rq)=D(p,q)$ for all $p,q,r\in G$ 

then, we say that $(G, \cdot, D, \star)$ is an invariant probabilistic metric group.
\end{definition}
\begin{definition} Two probabilistic metric spaces $(G,D,\star)$ and $(G',D',\star)$ (resp. monoids, groups) are said to be isometric (resp. isometrically isomorphic), if there exists a bijective map (resp. isomorphism of monoids, isomorphism of groups) $\mathcal{I}: G\longrightarrow G'$ such that $D'(\mathcal{I}(x),\mathcal{I}(y))=D(x,y)$ for all $x, y\in G$.
\end{definition}
\begin{definition} Let $(G, D,\star)$ be a probabilistic metric space and $\star=\star_T$, where
$$ (K \star_T L)(t) = \sup_{u+v=t}T(K(u), L(v))$$
for a $t$-norm $T$. Then $(G, D,\star)$ is the so called Menger space, which will be denoted
by $(G, D, T)$. If moreover $G$ is a group, we say that $(G, D, T)$ is a Menger group.
\end{definition}

\section{\bf Introduction to the Probabilistic Lipschitz Maps} \label{S2}
We are going to introduce the concept of probabilistic Lipschitz maps defined from a probabilistic metric space into $\Delta^+$. 
\begin{definition} Let $(G, D, \star)$ be a probabilistic metric space and let $f : G \longrightarrow \Delta^+$ be a map. 

$(1)$ We say that $f$ is continuous at $z\in G$ if $f(z_n)\,{\xrightarrow {\textnormal{w}}}\, f(z)$, whenever $D(z_n,z)\,{\xrightarrow {\textnormal{w}}}\,\mathcal{H}_0$. We say that $f$ is continuous if $f$ is continuous at each point $z\in G$.

$(2)$ We say that $f$ is $1$-Lipschitz map if: $$\forall x, y \in G,\hspace{1mm} D(x,y)\star f(y)\leq f(x).$$
\end{definition}
\vskip5mm
We can also define a $k$-Lipschitz map $f$ for a nonegative real number $k\geq 0$ as follows 
 $$\forall x, y \in G,\hspace{1mm} D_k(x,y)\star f(y)\leq f(x),$$
where, for all $x, y \in G$ and all $t\in \R$, $D_k(x,y)(t)=D(x,y)(\frac{t}{k})$ if $k>0$ and $D_0(x,y)(t)=\mathcal{H}_0(t)$ if $k=0$. In this paper, we deal only with $1$-Lipschitz maps.
\begin{example} \label{example.2} Let $(G,d)$ be a metric space. Assume that $\star$ is a triangle function on $\Delta^+$ satisfying $\mathcal{H}_a\star \mathcal{H}_b=\mathcal{H}_{a+b}$ for all $a, b\in \R^+$ (for example if $\star=\star_T$ where $T$ is a lef-continuous $t$-norm). Let $(G, D, \star)$ be the probabilistic metric space defined with the probabilistic metric 
$$D(p,q)=\mathcal{H}_{d(p,q)}.$$
Let $L: (G,d) \longrightarrow \R^+$ be a real-valued map. Then, $L$ is a nonnegative $1$-Lipschitz map if and only if $f : (G,D,\star) \longrightarrow \Delta^+$ defined for all $x\in G$ by
$$f(x):=\mathcal{H}_{L(x)}$$
is a probabilistic $1$-Lipschitz map. This example shows that the framework of probabilistic $1$-Lipschitz maps encompasses the classical determinist case.

\end{example}
\vskip5mm
By $Lip^1_\star(G,\Delta^+)$ we denote the space of all probabilistic $1$-Lipschitz maps   

$$Lip^1_\star(G,\Delta^+):=\lbrace f : G\longrightarrow \Delta^+/ D(x,y)\star f(y)\leq f(x); \forall x, y \in G\rbrace.$$

For all $x\in G$, by $\delta_x$ we denote the map 
\begin{eqnarray*}
\delta_x : G &\longrightarrow& \Delta^+\\
            y&\mapsto& D(y,x).
\end{eqnarray*}

We set $\mathcal{G}(G):=\lbrace \delta_x/ x\in G \rbrace$ and by $\delta$, we denote the operator  
\begin{eqnarray*}
\delta : G &\longrightarrow& \mathcal{G}(G)\subset Lip^1_\star(G,\Delta^+)\\
            x &\mapsto& \delta_x.
\end{eqnarray*}
\subsection{Basic Properties of $1$-Lipschitz Maps} 
In this section, we give some basic and elementary but useful properties of probabilistic $1$-Lipschitz maps.
%
%
\begin{proposition} \label{prop9} Let $(G, D, \star)$ be a probabilistic metric space. Then, we have that $\delta_a \in Lip^1_\star(G,\Delta^+)$ for each $a\in G$ and the map $\delta : G \longrightarrow Lip^1_\star(G,\Delta^+)$ is injective. 
\end{proposition}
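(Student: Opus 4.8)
The plan is to verify the two assertions separately, both of which I expect to follow directly from the defining axioms of a probabilistic metric space in Definition \ref{definition.M}. The only preliminary observation is that each $\delta_a$ is a well-defined map into $\Delta^+$, since $\delta_a(y)=D(y,a)\in\Delta^+$ by hypothesis on $D$.

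First, to show $\delta_a\in Lip^1_\star(G,\Delta^+)$, I would simply unfold the $1$-Lipschitz condition for the map $f=\delta_a$. Because $\delta_a(y)=D(y,a)$ by definition, the inequality to be checked, $D(x,y)\star \delta_a(y)\leq \delta_a(x)$ for all $x,y\in G$, becomes $D(x,y)\star D(y,a)\leq D(x,a)$. This is exactly the triangle inequality axiom $(iii)$ of Definition \ref{definition.M}, read with the triple $(p,q,r)=(x,y,a)$; no computation is needed beyond matching the variables, so $\delta_a$ lies in $Lip^1_\star(G,\Delta^+)$.

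For the injectivity of $\delta$, I would argue by evaluating the equality of two maps at a well-chosen point. Suppose $\delta_a=\delta_b$ in $Lip^1_\star(G,\Delta^+)$; then $D(y,a)=D(y,b)$ for every $y\in G$. Taking $y=b$ and invoking axiom $(i)$ in the form $D(b,b)=\mathcal{H}_0$, one obtains $D(b,a)=\mathcal{H}_0$, and the converse direction of the same axiom $(i)$ forces $a=b$. (One could equally well take $y=a$, or appeal to the symmetry axiom $(ii)$; the choice is immaterial.)

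I do not anticipate a genuine obstacle here: both parts are immediate consequences of the axioms $(i)$ and $(iii)$, and the only point requiring a little care is the bookkeeping of the argument order in $D$, namely that $\delta_a(y)$ equals $D(y,a)$, so that the triangle inequality is applied in the correct orientation. Axiom $(ii)$ renders even this harmless.
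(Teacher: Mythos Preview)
Your proposal is correct and matches the paper's own proof essentially line for line: the paper also invokes the triangle inequality $D(x,y)\star D(y,a)\leq D(x,a)$ for the $1$-Lipschitz property, and for injectivity evaluates the equality $\delta_a=\delta_b$ at the point $b$ to obtain $D(b,a)=\mathcal{H}_0$ and hence $a=b$. Your extra remarks on argument order and the role of axiom~$(ii)$ are accurate but not needed beyond what the paper uses.
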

\begin{proof} The fact that $\delta_a \in Lip^1_\star(G,\Delta^+)$ for each $a\in G$ follows from the property: $ D(x,y)\star D(y,a) \leq D(x,a)$ for all $a, x, y\in G$. Now, let $a, b \in G$ be such that $\delta_a=\delta_b$. It follows that $\delta_a(x)=\delta_b(x)$ for all $x\in G$. In particular, for $x=b$ we have that $D(a,b)=\delta_a(b)=\delta_b(b)=\mathcal{H}_0$, which implies that $a=b$.\\
\end{proof}
\vskip5mm
Let $f\in Lip^1_\star(G,\Delta^+)$ and $F\in \Delta^+$, by $\langle f, F\rangle : G \longrightarrow \Delta^+$, we denote the map defined by $\langle f, F\rangle (x):=f(x)\star F$ for all $x\in G$. We easly obtain the following propostion.

\begin{proposition} Let $(G, D, \star)$ be a probabilistic metric space. Then, for all $f \in Lip^1_\star(G,\Delta^+)$ and all $F\in \Delta^+$, we have that $\langle f, F\rangle \in Lip^1_\star(G,\Delta^+)$.
\end{proposition}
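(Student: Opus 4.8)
The plan is to verify directly the defining inequality of a probabilistic $1$-Lipschitz map for $\langle f, F\rangle$, exploiting only the monoid axioms $(i)$--$(v)$ of the triangle function $\star$ and the fact that $f$ itself is $1$-Lipschitz. First I would record that $\langle f, F\rangle$ is genuinely a map $G \longrightarrow \Delta^+$: for each $x\in G$ we have $f(x)\in\Delta^+$ and $F\in\Delta^+$, so by axiom $(i)$ the value $\langle f, F\rangle(x)=f(x)\star F$ lies in $\Delta^+$. This settles well-definedness before turning to the Lipschitz estimate.

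The core step is to fix $x,y\in G$ and show
$$D(x,y)\star \langle f, F\rangle(y)\leq \langle f, F\rangle(x),$$
that is, $D(x,y)\star\bigl(f(y)\star F\bigr)\leq f(x)\star F$. I would first use associativity and commutativity (axioms $(ii)$ and $(iii)$) to regroup the left-hand side as
$$D(x,y)\star\bigl(f(y)\star F\bigr)=\bigl(D(x,y)\star f(y)\bigr)\star F.$$
Since $f\in Lip^1_\star(G,\Delta^+)$, the defining property gives $D(x,y)\star f(y)\leq f(x)$. Applying the monotonicity axiom $(v)$ with $K=F$ then yields
$$\bigl(D(x,y)\star f(y)\bigr)\star F\leq f(x)\star F=\langle f, F\rangle(x),$$
which is exactly the desired inequality. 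As $x,y\in G$ were arbitrary, this shows $\langle f, F\rangle\in Lip^1_\star(G,\Delta^+)$.

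There is no serious obstacle here, as the proposition's preamble ("We easily obtain") already signals; the argument is purely algebraic and rests on the abelian monoid structure of $(\Delta^+,\star)$. The only point demanding a little care is the bookkeeping of associativity and commutativity when regrouping $D(x,y)\star f(y)\star F$, so that the $1$-Lipschitz hypothesis on $f$ can be applied to the pair $D(x,y)\star f(y)$ before the monotone composition with $F$. Everything else is a direct invocation of axioms $(ii)$, $(iii)$, and $(v)$.
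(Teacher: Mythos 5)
Your proof is correct, and it is exactly the direct verification the paper has in mind: the paper states this proposition without proof (prefacing it with ``We easily obtain''), and your argument---regroup by associativity, apply the $1$-Lipschitz inequality for $f$, then use monotonicity of $\star$---is the intended routine check (note only axioms $(iii)$ and $(v)$ are actually needed; commutativity plays no essential role).
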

In a classical metric space $(M,d)$, we know that the distance function to a nonempty subset $A$ of $M$ defined by $x\mapsto d(x,A):=\inf_{y\in A} d(x,y)$ is a real-valued $1$-Lipschitz map. We have the analogous of this property in the probabilistic metric case.
\begin{proposition} Let $(G,D,\star)$ be a probabilistic metric space. Suppose that $\star$ is sup-continuous. Let $A$ be a nonempty subset of $G$. Then the map $D(x,A): x\mapsto \sup_{y\in A} D(x,y)$ is a probabilistic $1$-Lipschitz map.
\end{proposition}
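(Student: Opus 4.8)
The plan is to verify directly the defining inequality of a probabilistic $1$-Lipschitz map for the function $f(x):=\sup_{z\in A}D(x,z)$. First I would note that $f$ is well defined with values in $\Delta^+$: for each $x\in G$ the family $(D(x,z))_{z\in A}$ lies in $\Delta^+$, and since $(\Delta^+,\leq)$ is a complete lattice its supremum again belongs to $\Delta^+$. Conceptually, this map is the exact probabilistic counterpart of the classical distance-to-a-set function, the supremum replacing the infimum because the order on $\Delta^+$ is reversed relative to the order on distances (larger distribution functions encode smaller distances, $\mathcal{H}_0$ being the maximal element).

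Next, I fix $x,y\in G$ and must show $D(x,y)\star f(y)\leq f(x)$. The key computation is to push the operation $D(x,y)\star(\cdot)$ inside the supremum defining $f(y)$. This is exactly where the hypothesis that $\star$ is sup-continuous is used: applying sup-continuity to the family $(D(y,z))_{z\in A}$ together with commutativity of $\star$ gives
\[
D(x,y)\star f(y)=D(x,y)\star\Big(\sup_{z\in A}D(y,z)\Big)=\sup_{z\in A}\big(D(x,y)\star D(y,z)\big).
\]
I would then bound each term of this supremum using the triangle inequality (axiom $(iii)$ of Definition \ref{definition.M}), namely $D(x,y)\star D(y,z)\leq D(x,z)$, and pass to the supremum over $z\in A$ to conclude
\[
D(x,y)\star f(y)=\sup_{z\in A}\big(D(x,y)\star D(y,z)\big)\leq\sup_{z\in A}D(x,z)=f(x),
\]
which is the required inequality.

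The only delicate point is the first displayed equality, and I expect it to be the main obstacle. Monotonicity of $\star$ alone yields only $\sup_{z}\big(D(x,y)\star D(y,z)\big)\leq D(x,y)\star\sup_{z}D(y,z)$, which is the wrong direction for our purpose; to bound $D(x,y)\star f(y)$ from above by the supremum of the individual products one genuinely needs the reverse inequality, and this is precisely what sup-continuity supplies. Everything else is a routine application of the metric triangle inequality and the monotonicity of the supremum, so once the interchange of $\star$ with $\sup$ is justified the proof is complete.
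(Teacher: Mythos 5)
Your proof is correct and takes essentially the same route as the paper's: use sup-continuity to interchange $D(x,y)\star(\cdot)$ with the supremum over $A$, then apply the triangle inequality $D(x,y)\star D(y,z)\leq D(x,z)$ termwise and pass to the supremum. Your extra remarks (well-definedness of $f$ via the complete lattice structure of $\Delta^+$, and why monotonicity alone gives only the wrong-direction inequality) are accurate refinements of the same argument.
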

\begin{proof} Using the sup-continuity of $\star$, we have for all $a, b \in G$
\begin{eqnarray*}
D(a,b)\star D(a,A):=D(a,b)\star \sup_{y\in A} D(a,y)&=&\sup_{y\in A} (D(a,b)\star D(a,y)\\
                                                    &\leq& \sup_{y\in A} D(b,y) =D(b,A).
\end{eqnarray*}
\end{proof}
The following classical proposition will be used very often:
\begin{proposition} \label{util}
Let $(F_n), (L_n), (K_n) \subset (\Delta^+,\star)$. Suppose that 

$(a)$ the triangle function $\star$ is continuous,

$(b)$ $F_n\,{\xrightarrow {\textnormal{w}}}\, F$, $L_n\,{\xrightarrow {\textnormal{w}}}\, L$ et $K_n\,{\xrightarrow {\textnormal{w}}}\, K$. 

$(c)$ for all $n\in \N$, $F_n \star L_n \leq K_n$,

then, $F\star L\leq K$.
\end{proposition}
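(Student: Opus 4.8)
The plan is to reduce the statement to a general fact about weak convergence in $\Delta^+$: if two sequences of distribution functions converge weakly and are ordered term by term, then their limits inherit the same order. First I would invoke hypothesis $(a)$ together with the weak convergences $F_n\,{\xrightarrow {\textnormal{w}}}\, F$ and $L_n\,{\xrightarrow {\textnormal{w}}}\, L$ from $(b)$: by the definition of continuity of the triangle function $\star$ (Definition \ref{contin}), this immediately yields $F_n\star L_n\,{\xrightarrow {\textnormal{w}}}\, F\star L$. Setting $G_n:=F_n\star L_n$ and $G:=F\star L$, the problem reduces to the following: given $G_n\,{\xrightarrow {\textnormal{w}}}\, G$, $K_n\,{\xrightarrow {\textnormal{w}}}\, K$ and $G_n\leq K_n$ for every $n$, prove that $G\leq K$, that is, $G(t)\leq K(t)$ for all $t\in\R$.

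Next I would use that weak convergence only guarantees pointwise convergence at continuity points. Let $C$ be the set of points at which both $G$ and $K$ are continuous. Since $G$ and $K$ are nondecreasing, each of them has at most countably many discontinuities, so $C$ is the complement of a countable set and is therefore dense in $\R$. For $t\in C$ we have $G_n(t)\to G(t)$ and $K_n(t)\to K(t)$, and passing to the limit in the term-by-term inequality $G_n(t)\leq K_n(t)$ gives $G(t)\leq K(t)$. Thus the desired inequality already holds on the dense set $C$.

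The main obstacle is upgrading this to every $t\in\R$, including the countably many discontinuity points, and this is precisely where the left-continuity built into the definition of $\Delta^+$ is used. Fix an arbitrary $t\in\R$; since $C$ is dense I can choose a sequence $t_m\in C$ with $t_m\uparrow t$. For each $m$ we have $G(t_m)\leq K(t_m)$, and letting $m\to\infty$ the left-continuity of $G$ and of $K$ yields $G(t)=\lim_m G(t_m)\leq\lim_m K(t_m)=K(t)$. As $t\in\R$ was arbitrary, this gives $G\leq K$, i.e. $F\star L\leq K$, which completes the argument.
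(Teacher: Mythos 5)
Your proof is correct and follows essentially the same route as the paper: use the continuity of $\star$ to get $F_n\star L_n\,{\xrightarrow {\textnormal{w}}}\, F\star L$, pass to the limit in the inequality at the (co-countable set of) common continuity points of $F\star L$ and $K$, and then invoke left-continuity to extend the inequality to all of $\R$. Your write-up merely makes explicit the density argument and the approximating sequence $t_m\uparrow t$ that the paper leaves implicit.
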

\begin{proof} It is well known that an increasing function from $\R$ into $\R$ is continuous outside a set which is at most countable. Clearly, from Definition \ref{contin} we have that $(F\star L)(t)\leq K(t)$ for all $t\in \R$ which is a point of continuity of both $F\star L$ and $K$. Since both $F\star L$ and $K$ are let-continuous, we deduce that $(F\star L)(t)\leq K(t)$ for all $t\in \R$.
\end{proof}
We also recall the following deep results due to D. Sibley: From \cite[Theorem 1. and Theorem 2.]{DS} we know that the topological space $(\Delta^+,\textnormal{w})$ (equipped with the weak convergence) is metrizable by the modified L\'evy metric and from \cite[Theorem 3.]{DS} we know that $(\Delta^+,\textnormal{w})$ is a compact metrizable space. This last result will be used very often in this paper. We have the following proposition.
\begin{proposition} Let $(G,D,\star)$ be a probabilistic metric space such that $\star$ is continuous. Then, every probabilistic $1$-Lipschitz map defined on $G$ is continuous.
\end{proposition}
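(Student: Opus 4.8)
The plan is to fix a point $z\in G$ and a sequence $(z_n)$ with $D(z_n,z)\xrightarrow{\textnormal{w}}\mathcal{H}_0$, and to show $f(z_n)\xrightarrow{\textnormal{w}}f(z)$. The only structural input is the $1$-Lipschitz inequality applied in both directions: using the symmetry axiom $(ii)$ of $D$, which makes both one-sided bounds available, one gets for every $n$ the two-sided estimate
\begin{eqnarray*}
D(z_n,z)\star f(z)&\leq& f(z_n),\\
D(z_n,z)\star f(z_n)&\leq& f(z).
\end{eqnarray*}
Everything after this is a compactness/subsequence routine, so I would record these two inequalities first and then work entirely with weak limits.

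Since $(\Delta^+,\textnormal{w})$ is a compact metrizable space (Sibley's result quoted just above the statement), to prove $f(z_n)\xrightarrow{\textnormal{w}}f(z)$ it suffices to show that every weakly convergent subsequence of $(f(z_n))$ has limit $f(z)$. So I would take an arbitrary subsequence and, invoking compactness, extract a further subsequence $(f(z_{n_k}))$ converging weakly to some $L\in\Delta^+$; the goal then reduces to proving $L=f(z)$.

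To identify $L$, I would pass to the limit in the two displayed inequalities along this subsequence by means of Proposition \ref{util}, which is tailored for exactly this and whose hypothesis that $\star$ be continuous is precisely our standing assumption. In the first inequality, take $F_k=D(z_{n_k},z)\xrightarrow{\textnormal{w}}\mathcal{H}_0$, the constant sequence $L_k=f(z)\xrightarrow{\textnormal{w}}f(z)$, and $K_k=f(z_{n_k})\xrightarrow{\textnormal{w}}L$; Proposition \ref{util} then yields $\mathcal{H}_0\star f(z)\leq L$, that is $f(z)\leq L$, since $\mathcal{H}_0$ is the identity of $\star$ by axiom $(iv)$ together with commutativity. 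In the second inequality, take $F_k=D(z_{n_k},z)\xrightarrow{\textnormal{w}}\mathcal{H}_0$, $L_k=f(z_{n_k})\xrightarrow{\textnormal{w}}L$, and the constant $K_k=f(z)\xrightarrow{\textnormal{w}}f(z)$; the same proposition gives $\mathcal{H}_0\star L\leq f(z)$, that is $L\leq f(z)$. Combining the two inclusions produces $L=f(z)$, which closes the subsequence argument and thereby establishes continuity of $f$ at $z$, hence everywhere.

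I do not expect a genuine obstacle here, since the compactness of $(\Delta^+,\textnormal{w})$ and Proposition \ref{util} do the heavy lifting. The one point demanding care is that weak convergence does not pass cleanly through inequalities in a pointwise manner, which is exactly why one cannot simply take limits of $D(z_n,z)\star f(z)\leq f(z_n)$ termwise, but must route the limiting step through Proposition \ref{util} and a prior subsequence extraction to a known limit $L$. It is the combination of the two symmetric Lipschitz bounds that pins $L$ down from both sides.
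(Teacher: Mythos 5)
Your proposal is correct and follows essentially the same route as the paper's own proof: compactness and metrizability of $(\Delta^+,\textnormal{w})$ reduce the problem to identifying the limit of an arbitrary weakly convergent subsequence, and the two $1$-Lipschitz inequalities combined with Proposition \ref{util} pin that limit down as $f(z)$ from both sides. The only cosmetic difference is that you phrase the reduction via "every subsequence has a further convergent subsequence," while the paper directly considers an arbitrary convergent subsequence; both are equivalent in a compact metric space.
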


\begin{proof} Let $f$ be a probabilistic $1$-Lipschitz map, $x\in G$ and $(x_n)_n\subset G$ be a sequence such that $\lim_n D(x_n,x)=\mathcal{H}_0$. Since $(\Delta^+,\textnormal{w})$ is a compact metrizable space, it suffices to prove that every (weak) convergent subsequence $(f(x_{\varphi(n)}))$, converges necessary to $f(x)$. Suppose that a subsequence $(f(x_{\varphi(n)}))$ converges to some distribution $F$ and let us prove that $F=f(x)$. Indeed, Since $f$ is a probabilistic $1$-Lipschitz map, then for all $n \in \N$ we have
$$D(x_{\varphi(n)},x)\star f(x)\leq f(x_{\varphi(n)}).$$
Using Proposition \ref{util} and the fact that $D(x_{\varphi(n)},x)\,{\xrightarrow {\textnormal{w}}}\,\mathcal{H}_0$, we get from the above inequality that $f(x)\leq F$. Also we have 
$$D(x_{\varphi(n)},x)\star f(x_{\varphi(n)})\leq f(x).$$
Again, using Proposition \ref{util}, the fact that $D(x_{\varphi(n)},x)\,{\xrightarrow {\textnormal{w}}}\,\mathcal{H}_0$ and $f(x_{\varphi(n)})\,{\xrightarrow {\textnormal{w}}}\, F$, we get that $F\leq f(x)$. Thus $F=f(x)$ and so, $f(x_n)\,{\xrightarrow {\textnormal{w}}}\,f(x)$. This finish the proof.
\end{proof}

\subsection{A Probabilistic Analogue of Mac Shane Result's} 

Recall the Lipschitz extention result of Mac Shane in \cite{MS}: if $(X,d)$ is a metric space, $A$ a nonempty subset of $X$ and $f: A \longrightarrow \R$ is $k$-Lipschitz map, then there exist a $k$-Lipschitz map $\tilde{f}: X \longrightarrow \R$ such that $\tilde{f}_{|A}=f$. We give bellow an analogue of this result for probabilistic $1$-Lipschitz maps.

\begin{theorem} \label{MC} Let $(G, D, \star)$ be a probabilistic metric space such that $\star$ is sup-continuous and let $A$ be a nonempty subset of $G$. Let $f: A\longrightarrow \Delta^+$ be a probabilistic $1$-Lipschitz map. Then, there exists a probabilistic $1$-Lipschitz map $\tilde{f}: G\longrightarrow \Delta^+$ such that $\tilde{f}_{|A}=f$.
\end{theorem}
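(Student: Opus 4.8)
The plan is to imitate the classical McShane extension, replacing the ``infimum of translates'' formula by its lattice analogue in $\Delta^+$. Since in the model case $f=\mathcal{H}_{L}$ (Example \ref{example.2}) a larger real value corresponds to a \emph{smaller} distribution in $(\Delta^+,\leq)$, the classical infimum over the index set turns into a supremum; the natural candidate is therefore
$$\tilde{f}(x) := \sup_{a \in A} \big( D(x,a) \star f(a) \big), \qquad x \in G.$$
First I would check this is well defined. Each $D(x,a) \star f(a)$ lies in $\Delta^+$ because $\star$ maps $\Delta^+ \times \Delta^+$ into $\Delta^+$, and since $(\Delta^+,\leq)$ is a complete lattice the supremum of the nonempty family indexed by $a \in A$ is again an element of $\Delta^+$. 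Hence $\tilde{f} : G \to \Delta^+$ is a genuine map.

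Next I would verify that $\tilde{f}$ extends $f$. Fix $x \in A$. Taking $a = x$ and using $D(x,x) = \mathcal{H}_0$ together with the identity axiom $\mathcal{H}_0 \star f(x) = f(x)$ gives $f(x) \leq \tilde{f}(x)$. Conversely, since $f$ is probabilistic $1$-Lipschitz on $A$, for every $a \in A$ we have $D(x,a) \star f(a) \leq f(x)$, and passing to the supremum yields $\tilde{f}(x) \leq f(x)$. Thus $\tilde{f}(x) = f(x)$ for each $x \in A$, that is, $\tilde{f}_{|A} = f$.

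Finally I would prove the Lipschitz inequality $D(x,y) \star \tilde{f}(y) \leq \tilde{f}(x)$ for all $x, y \in G$. Using the sup-continuity of $\star$ (together with its commutativity) to pull $D(x,y) \star (\cdot)$ through the supremum, and then associativity, one obtains
$$D(x,y) \star \tilde{f}(y) = \sup_{a \in A} \big( D(x,y) \star D(y,a) \star f(a) \big).$$
For each $a$ the probabilistic triangle inequality $D(x,y) \star D(y,a) \leq D(x,a)$ combined with the monotonicity of $\star$ gives $D(x,y) \star D(y,a) \star f(a) \leq D(x,a) \star f(a)$; taking the supremum over $a \in A$ then bounds the right-hand side by $\tilde{f}(x)$, which is exactly the desired inequality.

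I expect the only genuine subtlety to be the interchange of $\star$ with the supremum in the last step. This is precisely what sup-continuity provides, and it is the reason that hypothesis — rather than mere continuity of $\star$ — is imposed; the remainder of the argument uses only the monoid axioms of $\star$ and the triangle inequality for $D$.
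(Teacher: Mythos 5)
Your proposal is correct and follows essentially the same route as the paper: the same extension formula $\tilde{f}(x)=\sup_{a\in A} D(a,x)\star f(a)$, the same two-sided argument that $\tilde{f}$ restricts to $f$ on $A$, and the same use of the triangle inequality, monotonicity, and sup-continuity of $\star$ to pass the supremum through and obtain the $1$-Lipschitz property. The only cosmetic difference is that the paper bounds $\tilde{f}(x)\star D(x,y)$ by $\tilde{f}(y)$ while you bound $D(x,y)\star\tilde{f}(y)$ by $\tilde{f}(x)$, which are the same statement after using the symmetry of $D$ and commutativity of $\star$.
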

\begin{proof}
We define $\tilde{f}: G \longrightarrow \Delta^+$ as follows: for all $x\in G$,
$$\tilde{f}(x):=\sup_{a\in A} D(a,x)\star f(a).$$
We first prove that $\tilde{f}(x)=f(x)$ for all $x\in A$. Indeed, let $x\in A$. On one hand we have $f(x)=\mathcal{H}_0\star f(x)=D(x,x)\star f(x)\leq \sup_{a\in A} D(a,x)\star f(a)=\tilde{f}(x)$. On the other hand, since $f$ is probabilistic $1$-Lipschitz on $A$ and $x\in A$, then $D(a,x)\star f(a)\leq f(x)$ for all $a\in A$. It follows that $\tilde{f}(x):=\sup_{a\in A} D(a,x)\star f(a) \leq f(x)$. Thus, $\tilde{f}(x)=f(x)$ for all $x\in A$. Now, we show that $\tilde{f}$ is probabilistic $1$-Lipschitz on $G$. Indeed, Let $x, y \in G$. For all $a,x,y\in A$ we have that $D(a,x)\star D(x,y)\leq D(a,y)$. So, $D(a,x)\star f(a)\star D(x,y)\leq D(a,y)\star f(a)$. By taking the supremum over $a\in A$ and using the sup-continuity of $\star$ we get $\tilde{f}(x)\star D(x,y) \leq \tilde{f}(y)$, for all $x,y \in G$. Hence, $\tilde{f}$ is probabilistic $1$-Lipschitz map on $G$ that coincides with $f$ on $A$.
\end{proof}
%
\subsection{Probabilistic Sup-Convolution} 
In this section, we assume that $G$ is a group. The aim of this section is to prove that if $(G, \cdot, D, \star)$ is a  probabilistic invariant metric group having $e$ as identity element, then we can equip the set $Lip^1_\star(G,\Delta^+)$ of probabilistic $1$-Lipschitz maps with a Sup-Convolution law denoted by $\odot$, so that $(Lip^1_\star(G,\Delta^+), \odot)$ becomes a monoid having $\delta_e$ as identity element (Theorem \ref{thm1}). This will allow, subsequently, to embed $G$ as group into $Lip^1_\star(G,\Delta^+)$. The probabilistic sup-convolution introduced below is an extension of the inf-convolution treated recently in the classical invariant metric groups framework in \cite{Ba1}, \cite{Ba2} and \cite{Ba3}.
\vskip5mm
Let $G$ be a group and let $f, g : G \longrightarrow \Delta^+$ be two maps. The Sup-Convolution of $f$ and $g$ denoted $f\odot g :G \longrightarrow \Delta^+$ is a map defined for all $x\in G$ as follows 
\begin{eqnarray*}
(f \odot g) (x)&:=& \sup_{y,z\in G / yz=x} f(y)\star g(z)\\
                  &=& \sup_{y\in G} f(y)\star g(y^{-1}x).
\end{eqnarray*} 
%
%
In order to prove that $(Lip^1_\star(G,\Delta^+), \odot)$ has a monoid structure, we need to establish the three following propositions. Recall that a monoid is a semigroup having an identity element.
\begin{proposition} \label{prop10} Let $(G, \cdot, D, \star)$ be an invariant probabilistic metric group. Then, we have that $\delta_a\odot \delta_b=\delta_{ab}$ for all $a,b\in G$. 
\end{proposition}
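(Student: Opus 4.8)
The plan is to evaluate both sides of the claimed identity pointwise and establish the two inequalities $\delta_a \odot \delta_b \leq \delta_{ab}$ and $\delta_{ab} \leq \delta_a \odot \delta_b$ separately. Unwinding the definition of the sup-convolution and of the maps $\delta_x$, for every $x \in G$ one has
$$(\delta_a \odot \delta_b)(x) = \sup_{y \in G} \delta_a(y) \star \delta_b(y^{-1}x) = \sup_{y \in G} D(y,a) \star D(y^{-1}x, b),$$
so the entire argument reduces to comparing this supremum with $\delta_{ab}(x) = D(x, ab)$.

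For the upper bound, the key idea is to rewrite each term of the supremum using the invariance axiom so that the probabilistic triangle inequality applies. Right-multiplying by $b$ gives $D(y,a) = D(yb, ab)$, while left-multiplying by $y$ gives $D(y^{-1}x, b) = D(x, yb)$. Hence, using the commutativity of $\star$ together with the triangle inequality $D(x,yb)\star D(yb,ab)\leq D(x,ab)$,
$$D(y,a) \star D(y^{-1}x, b) = D(x, yb) \star D(yb, ab) \leq D(x, ab)$$
for every $y \in G$; taking the supremum over $y$ then yields $(\delta_a \odot \delta_b)(x) \leq D(x, ab)$.

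For the lower bound it suffices to exhibit a single $y$ realizing the value $D(x, ab)$. Choosing $y = a$ makes the first factor $D(a,a) = \mathcal{H}_0$, and since $\mathcal{H}_0$ is the identity for $\star$, the corresponding term collapses to $D(a^{-1}x, b)$, which by left-invariance equals $D(x, ab)$. Thus the supremum is bounded below by $D(x, ab)$, and combining the two inequalities gives $(\delta_a \odot \delta_b)(x) = D(x, ab) = \delta_{ab}(x)$ for all $x \in G$, as desired.

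I do not expect a serious obstacle here: the statement follows entirely from the group-invariance axiom and the abelian monoid structure of $(\Delta^+,\star)$. The only point that requires care is selecting the correct one-sided invariance, left versus right multiplication, at each step so that the triangle inequality lines up properly, and, for the lower bound, observing that the substitution $y = a$ reduces one factor to the identity $\mathcal{H}_0$.
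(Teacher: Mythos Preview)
Your proof is correct. It differs from the paper's argument only in structure: the paper first establishes the special case $\delta_e\odot\delta_e=\delta_e$ and then reduces $\delta_a\odot\delta_b$ to this case by a change of variable $u=zb^{-1}$ together with invariance, whereas you handle the general case directly by rewriting each term $D(y,a)\star D(y^{-1}x,b)$ as $D(x,yb)\star D(yb,ab)$ and applying the triangle inequality in one stroke. Your route is a little shorter and arguably more transparent, since it avoids the detour through the identity case; the paper's two-step approach, on the other hand, isolates the role of $\delta_e$ as the identity element, which is thematically convenient for the subsequent monoid discussion in Proposition~\ref{prop12}. Both arguments rest on exactly the same ingredients (invariance of $D$, the triangle inequality, and $\mathcal{H}_0$ as the $\star$-identity), so there is no essential difference in depth.
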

\begin{proof} We first prove that $\delta_e\odot \delta_e=\delta_e$, where $e$ is the identity element of $G$. Indeed, for all $x\in G$,
\begin{eqnarray*}
(\delta_e \odot \delta_e) (x)&:=& \sup_{y,z\in G / yz=x} D(y,e)\star D(z,e)\\
                                &=& \sup_{y,z\in G / yz=x}  D(y,e)\star D(e,z^{-1}) \textnormal{ by the invariance of } D\\
                                &\leq& \sup_{y,z\in G / yz=x} D(y,z^{-1}) \textnormal{ by the property $(iii)$ of } D\\
                                &=& \sup_{y,z\in G / yz=x} D(yz,e) \textnormal{ by the invariance of } D\\
                                &=& D(x,e).\\
                                &=& \delta_e(x)
\end{eqnarray*}
On the other hand, 
\begin{eqnarray*}
(\delta_e \odot \delta_e) (x)&:=& \sup_{y,z\in G / yz=x} D(y,e)\star D(z,e)\\
                                &\geq& D(x,e)\star D(e,e)\\
                                &=& D(x,e)\star \mathcal{H}_0\\
                                &=& D(x,e)\\
                                &=& \delta_e(x)
\end{eqnarray*}
Now, we prove that that $\delta_a\odot \delta_b=\delta_{ab}$ for all $a,b\in G$. Indeed, let $x\in G$ and $t\in \R$, using the change variable $u=zb^{-1}$ and the invariance of $D$ we have:
\begin{eqnarray*} 
\delta_{a}\odot \delta_{b}(x) & = & \sup_{z\in X} \delta_{a}(xz^{-1})\star\delta_{b}(z) \\
                                                       &=& \sup_{z\in G} \delta_e(xz^{-1}a^{-1})\star\delta_e(zb^{-1}) \\
                                                       &=& \sup_{u\in G} \delta_e(xb^{-1}t^{-1}a^{-1})\star\delta_e(u)\\
                                                     &=& \sup_{u\in X} \delta_e(a^{-1}xb^{-1}u^{-1})\star\delta_e(u)\\
                                                     &=& (\delta_e\odot \delta_e)(a^{-1}xb^{-1})\\
                                                       &=& \delta_e(a^{-1}xb^{-1})\\
                                                       &=& \delta_e(xb^{-1}a^{-1})\\
                                                       &=& \delta_e(x(ab)^{-1})\\
                                                       &=& \delta_{ab}(x).
\end{eqnarray*}
Thus, $\delta_a\odot \delta_b=\delta_{ab}$ for all $a,b\in G$.
\end{proof}
When $G$ is a probabilistic invariant metric group, the space $Lip^1_\star(G,\Delta^+)$ turns out to be naturally linked to the operation of Sup-Convolution. It is characterized by being the maximal set of maps from $G$ into $\Delta^+$ having $\delta_e$ as identity element for the operation $\odot$.
\begin{proposition} \label{prop12} Let $(G, \cdot, D, \star)$ be an invariant probabilistic metric group. Then, we have $$Lip^1_\star(G,\Delta^+)= \lbrace f : G\longrightarrow \Delta^+ \textnormal{ map } / f\odot \delta_e =\delta_e\odot f=f\rbrace.$$
\end{proposition}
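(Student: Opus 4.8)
The plan is to prove the two inclusions separately, and the whole argument rests on a single bookkeeping identity: for \emph{any} map $f:G\longrightarrow\Delta^+$ and any $x\in G$,
\[
(f\odot\delta_e)(x)=\sup_{y\in G} f(y)\star D(x,y).
\]
To obtain this I would unfold the definition of $\odot$, writing $(f\odot\delta_e)(x)=\sup_{y\in G} f(y)\star\delta_e(y^{-1}x)=\sup_{y\in G} f(y)\star D(y^{-1}x,e)$, and then apply the left part of the invariance axiom $(iv)$ of Definition \ref{definition.M} (with $r=y$, $p=y^{-1}x$, $q=e$) to rewrite $D(y^{-1}x,e)=D(x,y)$. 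A symmetric computation, using the substitution $z=y^{-1}x$ (so $y=xz^{-1}$) together with the right-invariance, shows likewise that $(\delta_e\odot f)(x)=\sup_{z\in G} D(x,z)\star f(z)$, which is the same quantity. So both $f\odot\delta_e$ and $\delta_e\odot f$ reduce to $x\mapsto\sup_{y\in G} f(y)\star D(x,y)$.

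For the inclusion $\subseteq$, let $f\in Lip^1_\star(G,\Delta^+)$. Using the reduction above, for every $y\in G$ the $1$-Lipschitz inequality $D(x,y)\star f(y)\leq f(x)$ together with commutativity of $\star$ (axiom $(ii)$ of Definition \ref{axiom1}) shows that each term satisfies $f(y)\star D(x,y)\leq f(x)$; taking the supremum gives $(f\odot\delta_e)(x)\leq f(x)$. Conversely, evaluating the supremum at the single index $y=x$ and using $D(x,x)=\mathcal{H}_0$ (axiom $(i)$ of Definition \ref{definition.M}) with the identity law $f(x)\star\mathcal{H}_0=f(x)$ (axiom $(iv)$ of Definition \ref{axiom1}) yields $f(x)\leq(f\odot\delta_e)(x)$. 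Hence $f\odot\delta_e=f$, and the identical estimate applied to the other reduction gives $\delta_e\odot f=f$.

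For the inclusion $\supseteq$, suppose $f:G\longrightarrow\Delta^+$ satisfies $f\odot\delta_e=f$ (one of the two identities already suffices). Then for all $x,y\in G$ the reduction above gives $f(x)=\sup_{z\in G} f(z)\star D(x,z)\geq f(y)\star D(x,y)=D(x,y)\star f(y)$, which is precisely the defining inequality of a probabilistic $1$-Lipschitz map; thus $f\in Lip^1_\star(G,\Delta^+)$. The only genuinely delicate point I anticipate is the reduction identity itself, i.e.\ the correct use of invariance to turn $\delta_e(y^{-1}x)$ into $D(x,y)$; once that is in hand, both inclusions are immediate. It is worth noting that, in contrast with several neighbouring statements in this section, \emph{no} sup-continuity of $\star$ is required here: the supremum is dominated term-by-term for the upper bound and realised at a single index for the lower bound, so $\star$ never has to be exchanged with $\sup$.
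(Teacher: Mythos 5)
Your proof is correct and follows essentially the same route as the paper's: both arguments hinge on using invariance to rewrite $\delta_e(y^{-1}x)$ as $D(x,y)$, then bound the supremum term-by-term via the $1$-Lipschitz inequality for one direction and evaluate at $y=x$ for the other, with the converse inclusion obtained by discarding the supremum. Your observation that neither sup-continuity nor the Lipschitz hypothesis is needed for the reduction identity is consistent with the paper, whose computation is likewise valid for arbitrary maps $f$.
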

\begin{proof} 
Let $f\in Lip^1_\star(G,\Delta^+)$. We first prove that $\delta_e\odot f(x)=f\odot \delta_e(x)$ for all $x\in G$. Using the variable change $u=xy^{-1}$ and the invariance of $D$, we have for all $x\in G$,
 \begin{eqnarray} (f\odot \delta_e)(x)&=&\sup_{y\in G} f(xy^{-1})\star\delta_e(y) \nonumber\\
                                    &=& \sup_{u\in G} f(u)\star\delta_e(u^{-1}x) \nonumber\\
                                    &=& \sup_{u\in G} \delta_e(xu^{-1})\star f(u) \nonumber\\
                                    &=& (\delta_e\odot f)(x). \nonumber
 \end{eqnarray} 
Now, let $f\in Lip^1_\star(G,\Delta^+)$ then for all $x, y \in G$ we have
\begin{eqnarray*}
f(x)&\geq& d(x,y)\star f(y)\\
    &=& d(xy^{-1},e)\star f(y)\\
    &=& \delta_e(xy^{-1})\star f(y).
\end{eqnarray*}
Taking the supremum over $y\in G$, we get $f(x)\geq (\delta_e\odot f)(x)=(f\odot \delta_e)(x)$ for all $x\in G$. For the converse inequality we have 
 \begin{eqnarray} f\odot \delta_e(x) &=& \sup_{y\in G} f(xy^{-1})\star \delta_e(y) \nonumber\\
                                                     &\geq & f(x)\star \delta_e(e) \nonumber\\
                                                     &=& f(x)\star \mathcal{H}_0 \nonumber\\
                                                     &=& f(x).\nonumber
 \end{eqnarray}
Thus $f\odot \delta_e =\delta_e\odot f=f$ for all $f\in Lip^1_\star(G,\Delta^+)$ and so we have that $Lip^1_\star(G,\Delta^+)\subset \lbrace f : G\longrightarrow \Delta^+ \textnormal{ map } / f\odot \delta_e =\delta_e\odot f=f\rbrace$. Now, let $f\in \lbrace f : G\longrightarrow \Delta^+ \textnormal{ map } / f\odot \delta_e =\delta_e\odot f=f\rbrace$. It follows that for all $x\in G$ 
 \begin{eqnarray*}
f(x) &=& f\odot \delta_e (x)\\
        &=& \sup_{yz=x} f(y)\star \delta_e(z)\\
        &\geq& f(y)\star \delta_e(y^{-1}x)\\
        &=& D(x,y)\star f(y).
\end{eqnarray*} 
It follows that $f\in Lip^1_\star(G,\Delta^+)$. Finally we have $Lip^1_\star(G,\Delta^+)= \lbrace f : G\longrightarrow \Delta^+ \textnormal{ map } / f\odot \delta_e =\delta_e\odot f=f\rbrace$.
\end{proof}
\begin{proposition} \label{prop13} Let $(G, \cdot, D, \star)$ be an invariant probabilistic metric group. Suppose that $\star$ is sup-continuous. Then, the set $(Lip^1_\star(G,\Delta^+), \odot)$ has the associative property. 
\end{proposition}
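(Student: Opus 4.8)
The plan is to prove the single identity, valid for all $f,g,h\in Lip^1_\star(G,\Delta^+)$ and all $x\in G$,
$$\big((f\odot g)\odot h\big)(x)=\big(f\odot(g\odot h)\big)(x),$$
by showing that both sides collapse to the same symmetric expression $\sup_{yzw=x} f(y)\star g(z)\star h(w)$, where the supremum runs over all triples $(y,z,w)\in G^3$ with $yzw=x$. At the outset I would record that $\odot$ is well-defined as a map into $\Delta^+$, since a supremum of any nonempty family of elements of $\Delta^+$ is again an element of $\Delta^+$.

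First I would expand the left-hand side. By definition $\big((f\odot g)\odot h\big)(x)=\sup_{uw=x}(f\odot g)(u)\star h(w)$, and substituting $(f\odot g)(u)=\sup_{yz=u} f(y)\star g(z)$ gives a supremum of the form $\sup_{uw=x}\big(\sup_{yz=u} f(y)\star g(z)\big)\star h(w)$. The crucial step is to move the inner supremum outside the operation $\star$: applying the sup-continuity of $\star$ to the family $(f(y)\star g(z))_{yz=u}$ and the fixed distribution $h(w)$ yields $\sup_{uw=x}\sup_{yz=u}\big(f(y)\star g(z)\big)\star h(w)$. Then the associativity axiom $(iii)$ of the triangle function removes the parentheses, and the two nested suprema combine into one: since $u$ is forced to equal $yz$, the constraints $uw=x$ and $yz=u$ together are exactly $yzw=x$. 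This gives the claimed symmetric expression.

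Next I would run the same three manipulations on the right-hand side. Writing $\big(f\odot(g\odot h)\big)(x)=\sup_{yv=x} f(y)\star(g\odot h)(v)$ and $(g\odot h)(v)=\sup_{zw=v} g(z)\star h(w)$, I again invoke sup-continuity to extract the inner supremum from inside $f(y)\star(\cdot)$ --- here one first uses the commutativity of $\star$ so that the supremum sits in the position covered by the sup-continuity definition --- then the associativity of $\star$ and the reindexing $v=zw$, together with $yv=x\Leftrightarrow yzw=x$. This produces the identical triple supremum, so the two sides agree.

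The only genuine obstacle is the interchange of $\sup$ and $\star$, which is precisely what sup-continuity is designed to license; this hypothesis cannot be dropped, since the monotonicity of $\star$ alone only delivers the one-sided bound $\sup_i(F_i\star L)\le(\sup_i F_i)\star L$. Everything else is a routine unfolding of the definition of $\odot$ and a bookkeeping of the index sets.
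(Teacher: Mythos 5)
Your proof is correct and takes essentially the same route as the paper's: both unfold the definition of $\odot$, use sup-continuity of $\star$ (together with its commutativity and associativity) to interchange suprema with $\star$, and re-index the constraints, the only difference being that the paper runs a single chain of equalities from $(f\odot g)\odot h$ to $f\odot (g\odot h)$ while you meet in the middle at the symmetric expression $\sup_{yzw=x} f(y)\star g(z)\star h(w)$ --- a purely presentational variation. Your explicit observation that commutativity is needed to apply the paper's (one-sided) definition of sup-continuity when the supremum sits in the second argument is a careful point the paper's computation uses only implicitly.
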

\begin{proof} Let $f, g, h \in Lip^1_\star(G,\Delta^+)$. Using the variable change $u:=yz$ together with the sup-continuity of $\star$, we obtain for all $x\in G$,
\begin{eqnarray*}
(f\odot g)\odot h(x)&=&\sup_{z\in G}\lbrace (f\odot g)(xz^{-1})\star h(z)\rbrace\\
                      &=&\sup_{z\in G}\lbrace (f\odot g)(xz^{-1})\star h(z)\rbrace\\
                      &=& \sup_{z\in G}\lbrace \sup_{y\in G}\lbrace f(xz^{-1}y^{-1})\star g(y)\rbrace\star h(z)\rbrace\\
                      &=& \sup_{z\in G}\lbrace \sup_{y\in G}\lbrace f(xz^{-1}y^{-1})\star g(y)\rbrace\star h(z)\rbrace\\
                      &=& \sup_{z\in G}\lbrace \sup_{u\in G}\lbrace f(xu^{-1})\star g(uz^{-1})\rbrace \star h(z)\rbrace\\
                      &=& \sup_{u\in G}\lbrace f(xu^{-1})\star \sup_{z\in G} \lbrace g(tz^{-1})\star h(z)\rbrace\rbrace\\
                      &=& \sup_{u\in G}\lbrace f(xu^{-1})\star (g\odot h)(u)\rbrace\\
                      &=& f\odot(g\odot h)(x).
\end{eqnarray*}
\end{proof}
\begin{theorem} \label{thm1} Let $(G, \cdot, D, \star)$ be an invariant probabilistic metric group. Suppose that $\star$ is sup-continuous. Then, the set $(Lip^1_\star(G,\Delta^+),\odot)$ is a monoid having $\delta_e$ as identity element. Moreover, the map $\delta: (G, \cdot) \longrightarrow (Lip^1_\star(G,\Delta^+),\odot)$ is an injective group homomorphism.
\end{theorem}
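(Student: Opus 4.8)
The plan is to assemble the monoid structure from the three preceding propositions, the only genuinely new point being the \emph{closure} of $\odot$ on $Lip^1_\star(G,\Delta^+)$, which is not among the stated propositions but follows cheaply from them. To exhibit a monoid I must verify three things: that $\odot$ is an internal binary operation on $Lip^1_\star(G,\Delta^+)$, that it is associative, and that it admits a two-sided identity. Associativity is exactly Proposition \ref{prop13}. The element $\delta_e$ lies in $Lip^1_\star(G,\Delta^+)$ by Proposition \ref{prop9} (take $a=e$), and Proposition \ref{prop12} says precisely that every $f\in Lip^1_\star(G,\Delta^+)$ satisfies $f\odot\delta_e=\delta_e\odot f=f$; hence $\delta_e$ is a two-sided identity once closure is known.

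The key step is therefore closure. Given $f,g\in Lip^1_\star(G,\Delta^+)$, I would \emph{not} try to verify the $1$-Lipschitz inequality for $f\odot g$ directly; instead I would use the characterization of Proposition \ref{prop12}, according to which $f\odot g\in Lip^1_\star(G,\Delta^+)$ as soon as $(f\odot g)\odot\delta_e=\delta_e\odot(f\odot g)=f\odot g$. Both identities are immediate from associativity (Proposition \ref{prop13}) together with the facts $g\odot\delta_e=g$ and $\delta_e\odot f=f$:
\begin{align*}
(f\odot g)\odot\delta_e&=f\odot(g\odot\delta_e)=f\odot g,\\
\delta_e\odot(f\odot g)&=(\delta_e\odot f)\odot g=f\odot g.
\end{align*}
Thus $f\odot g\in Lip^1_\star(G,\Delta^+)$, so $\odot$ is internal, and $(Lip^1_\star(G,\Delta^+),\odot)$ is a monoid with identity $\delta_e$.

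For the second assertion, injectivity of $\delta$ is already recorded in Proposition \ref{prop9}. That $\delta$ preserves the operation is Proposition \ref{prop10}, which gives $\delta_a\odot\delta_b=\delta_{ab}=\delta(ab)$ for all $a,b\in G$, together with $\delta(e)=\delta_e$, the identity of the monoid. Since $G$ is a group, each $\delta_a$ is in fact invertible in the monoid, with $\delta_a\odot\delta_{a^{-1}}=\delta_{aa^{-1}}=\delta_e$, so $\delta$ is an injective group homomorphism onto a subgroup of the units of $(Lip^1_\star(G,\Delta^+),\odot)$. The main (and really the only) obstacle is the closure step, and the point I want to highlight is that it should be obtained \emph{from} associativity and the identity characterization rather than by a direct computation; everything else is a bookkeeping assembly of Propositions \ref{prop9}, \ref{prop10}, \ref{prop12}, and \ref{prop13}.
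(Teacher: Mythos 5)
Your proposal is correct and follows essentially the same route as the paper: closure of $\odot$ is derived from the characterization in Proposition \ref{prop12} combined with associativity (Proposition \ref{prop13}), exactly as the paper does, and the homomorphism property comes from Proposition \ref{prop10}. The only additions (noting $\delta_e\in Lip^1_\star(G,\Delta^+)$ via Proposition \ref{prop9} and that each $\delta_a$ is a unit) are harmless and consistent with the paper's argument.
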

\begin{proof} First we prove that $f\odot g \in Lip^1_\star(G,\Delta^+)$, whenever $f, g \in Lip^1_\star(G,\Delta^+)$. Using Proposition \ref{prop12},it suffices to shows that $\delta_e\odot (f\odot g)=(f\odot g)\odot \delta_e=f\odot g$. Indeed, since $f\in Lip^1_\star(G,\Delta^+)$, then by Proposition \ref{prop12} we have that $\delta_e \odot f=f$. Thus, $(\delta_e \odot f)\odot g=f\odot g$. Using the associative property (Proposition \ref{prop13}) we obatin that $\delta_e \odot (f\odot g)=f\odot g$. In a similar way, since $g\odot \delta_e=g$, we obtain (using the associative property) that  $(f\odot g)\odot \delta_e=f\odot g$. Finally, we have $\delta_e\odot (f\odot g)=(f\odot g)\odot \delta_e=f\odot g$, which implies that $f\odot g \in Lip^1_\star(G,\Delta^+)$. This property with the associative property shows that $(Lip^1_\star(G,\Delta^+), \odot)$ is a semigroup. Using again Proposition \ref{prop12}, we obtain that $(Lip^1_\star(G,\Delta^+), \odot)$ is a monoid having $\delta_e$ as identity element. Using Proposition \ref{prop10} we get that the map $\delta: (G, \cdot) \longrightarrow (Lip^1_\star(G,\Delta^+),\odot)$ is a group homeomorphism.
\end{proof}
\section{\bf The Probabilistic Metric $1$-Lipschitz Space and Completions} \label{S3}
In this section, we give a new method of constructing the completion of probabilistic metric space. This method is based on the use of the probabilistic $1$-Lipschitz maps and is of independ interest. Indeed, it will be necessary to treat Section \ref{S4} especially to prove theorem \ref{theorem2}. Recall that in the classical metric spaces, there are at least two methods for constructing the completion of a metric space $M$. The first one uses the quotient by the Cauchy sequences of $M$ and the second one uses the isometric embedding of $M$ into a complete metric space of functions. The proof, in the probabilistic case, given by H. Sherwood in \cite{SHE1} uses the first approach. We will follow here the second approach, by embedding isometrically a given probabilistic metric space $G$ into a probabilistic metric space of $1$-Lipschitz maps. Note also that the completion of probabilistic normed space was given by B. Lafuerza Guill´en, J. A. Rodriguez Lallena, and C. Sempi \cite{LL}, using also the method of the quotient space by Cauchy sequences. In this section, we give in particular (Corollary \ref{corcomp}) the new result on the group completion of any probabilistic invariant metric group, which is the probabilistic version of the group completion given by V.L. Klee \cite{K} in the classical metric case.

\vskip5mm
We recall below the definition of probabilistic complete metric space.

\begin{definition} Let $(G, D, \star)$ be a probabilistic metric space. A sequence $(z_n)\subset G$ is said to be a Cauchy sequence if for all $t\in \R$,
$$\lim_{n,p \longrightarrow +\infty} D(z_n, z_p)(t)=\mathcal{H}_0(t).$$
(Equivalently, if $D(z_n, z_p)\,{\xrightarrow {\textnormal{w}}}\,\mathcal{H}_0$, when $n, p\longrightarrow +\infty$). A probabilistic metric space $(G, D, \star)$ is said to be complete if every Cauchy sequence $(z_n)\subset G$ weakly converges to some $z_{\infty}\in G$, that is $\lim_{n \rightarrow +\infty} D(z_n, z_{\infty})(t)=\mathcal{H}_0(t)$ for all $t\in \R$, we will briefly note $\lim_n D(z_n, z_{\infty})=\mathcal{H}_0$. 
\end{definition}
\begin{example} \label{example.1} We give below general frameworks of probabilistic complete metric spaces and probabilistic invariant complete metric groups.

$(1)$ Every complete metric space induce a probabilistic complete metric space. Indeed, if $d$ is a metric on $G$ and $\star$ is a triangle function on $\Delta^+$ satisfying $\mathcal{H}_a\star \mathcal{H}_b=\mathcal{H}_{a+b}$ for all $a, b\in \R^+$ (the triangle function $\star_T$ with a left-continuous $t$-norm $T$, satisfies this condition, see for instance \cite{HP} and \cite{SS}), then $(G, D, \star)$ where,
$$D(p,q)=\mathcal{H}_{d(p,q)}, \hspace{1mm} \forall p,q \in G, $$ 
is a probabilistic complete metric space. Thus, if $(G,\cdot,d)$ is invariant complete metric group (Several examples of invariant complete metric groups are given in \cite{Ba2}) then $(G,\cdot,D,\star)$ is a probabilistic invariant complete metric group.

$(2)$ Let $(G,F,\star)$ be a probabilistic complete normed vector space with a probabilistic norm $F$ (the definition of probabilistic complete normed space can be found in \cite{AS}, see also \cite{HP} p.66 for example). For all $p, q\in G$, let us set 
$$D(p,q)=F_{p-q}.$$
Then, $(G,+,D,\star)$ is a probabilistic invariant complete metric group (this example also works by remplacing probabilistic normed vector space by probabilistic normed groups, the definition can be found in \cite{SP},\cite{NP}).
\end{example}

\begin{definition}
Let $(G,D,\star)$ be a probabilistic metric space. We define the subset $\Pi(G)$ of $Lip^1_\star(G,\Delta^+)$ as follows: $f\in \Pi(G)$ if and only if, $f$ is a probabilistic $1$-Lipschitz map such that, there exist a Cauchy sequence $(a_n)\subset G$ satisfying: for all $x\in G$, $\delta_{a_n}(x):= D(a_n,x)\,{\xrightarrow {\textnormal{w}}}\, f(x)$, when $n\rightarrow+\infty$.
\end{definition}
Note that $\mathcal{G}(G)$ is always a subset of $\Pi(G)$ and we have $\mathcal{G}(G)=\Pi(G)$ if and only if $G$ is complete. The main result of this section is the following theorem. Its proof will be given after two lemmas.
\begin{theorem} \label{thmc} Let $(G,D,\star)$ be any probabilistic metric space, where $\star$ is a continuous triangle function. Let $\D: \Pi(G)\times \Pi(G)\longrightarrow \Delta^+$ be the map defined by
\begin{eqnarray*}
\D (f,g)=\sup_{x\in G} f(x)\star g(x)
\end{eqnarray*}
Then,

$(1)$ for all $a, b \in G$, $\D(\delta_a, \delta_b)=D(a,b)$.

$(2)$ $(\Pi(G), \D,\star)$ is a probabilistic complete metric space.

$(3)$ $\Pi(G)= \overline{\mathcal{G}(G)}^{\D}$. In other words, $\delta(G):=\mathcal{G}(G)$ is dense in $(\Pi(G), \D,\star)$.

\end{theorem}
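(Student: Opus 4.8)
The plan is to reduce every assertion to a single evaluation formula that computes $\D$ along the Cauchy sequences representing elements of $\Pi(G)$. The two preliminary lemmas I would establish are: (Lemma A) for any Cauchy sequence $(a_n)\subset G$ and any $g\in Lip^1_\star(G,\Delta^+)$, the sequence $(g(a_n))$ converges weakly in $\Delta^+$; and (Lemma B) if $f=\lim_n\delta_{a_n}$ and $g=\lim_n\delta_{b_n}$ in $\Pi(G)$, then $\D(f,g)=\lim_n g(a_n)=\lim_n D(a_n,b_n)$. Lemma A follows from the compactness of $(\Delta^+,\textnormal{w})$ (Sibley): any two subsequential limits $F,F'$ of $(g(a_n))$ satisfy $F\le F'$ and $F'\le F$ by feeding the $1$-Lipschitz inequality $D(a_{\varphi(n)},a_{\psi(n)})\star g(a_{\psi(n)})\le g(a_{\varphi(n)})$ into Proposition \ref{util}, using $D(a_{\varphi(n)},a_{\psi(n)})\,{\xrightarrow {\textnormal{w}}}\,\mathcal{H}_0$; hence every subsequential limit coincides and the full sequence converges.

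For Lemma B, I would bound $\D(f,g)=\sup_x f(x)\star g(x)$ from above by $\lim_n g(a_n)$ by passing to the limit in the fixed-$x$ inequality $D(a_n,x)\star g(x)\le g(a_n)$ (Proposition \ref{util}), and from below using $f(a_n)\star g(a_n)\le\D(f,g)$ together with $f(a_n)\,{\xrightarrow {\textnormal{w}}}\,\mathcal{H}_0$ (a consequence of $(a_n)$ being Cauchy) and continuity of $\star$ at $(\mathcal{H}_0,\lim_n g(a_n))$. The identity $\lim_n g(a_n)=\lim_n D(a_n,b_n)$ is then obtained by letting $m\to\infty$ in the two triangle inequalities $D(b_m,b_n)\star D(b_n,a_n)\le D(b_m,a_n)$ and $D(b_n,b_m)\star D(b_m,a_n)\le D(b_n,a_n)$, which give $g(b_n)\star D(b_n,a_n)\le g(a_n)$ and $g(b_n)\star g(a_n)\le D(b_n,a_n)$, and then letting $n\to\infty$ with $g(b_n)\,{\xrightarrow {\textnormal{w}}}\,\mathcal{H}_0$ to squeeze every subsequential limit of $(D(a_n,b_n))$ onto $\D(f,g)$.

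With Lemma B the theorem is almost mechanical. Part $(1)$ needs no machinery: $\D(\delta_a,\delta_b)=\sup_x D(x,a)\star D(x,b)\le D(a,b)$ by axiom $(iii)$ for $D$, while evaluating at $x=b$ gives $D(a,b)\star\mathcal{H}_0=D(a,b)$, so equality holds. For part $(2)$, symmetry of $\D$ is immediate from commutativity of $\star$; separation holds because $\D(f,f)=\lim_n f(a_n)=\mathcal{H}_0$, and conversely $\D(f,g)=\mathcal{H}_0$ forces $\lim_n g(a_n)=\mathcal{H}_0$, whence feeding $D(x,a_n)\star g(a_n)\le g(x)$ (and the symmetric inequality for $f$) into Proposition \ref{util} yields $f\le g$ and $g\le f$; the triangle inequality comes from $D(a_n,b_n)\star D(b_n,c_n)\le D(a_n,c_n)$ via the diagonal formula and Proposition \ref{util}. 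Part $(3)$ is the estimate $\D(\delta_{a_n},f)=f(a_n)\,{\xrightarrow {\textnormal{w}}}\,\mathcal{H}_0$, which is exactly the statement that $\delta_{a_n}\to f$ in $\D$, so $\mathcal{G}(G)$ is dense in $\Pi(G)$.

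Finally, for completeness in part $(2)$ I would combine density with the formula: given a $\D$-Cauchy sequence $(f_n)\subset\Pi(G)$, choose $\delta_{a_n}$ with $\D(f_n,\delta_{a_n})$ close to $\mathcal{H}_0$; then $D(a_n,a_p)=\D(\delta_{a_n},\delta_{a_p})$ shows $(a_n)$ is Cauchy in $G$, so $f(x):=\lim_n D(a_n,x)$ exists (Lemma A with $g=\delta_x$), is $1$-Lipschitz as a weak limit of $1$-Lipschitz maps (Proposition \ref{util}), hence lies in $\Pi(G)$, and $\D(f_n,f)\ge\D(f_n,\delta_{a_n})\star\D(\delta_{a_n},f)\,{\xrightarrow {\textnormal{w}}}\,\mathcal{H}_0$ by part $(3)$ and continuity of $\star$. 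The main obstacle is Lemma B: the quantity $\D(f,g)$ is a supremum over all of $G$, and since $\star$ is only assumed continuous (not sup-continuous) one cannot interchange $\star$ with this supremum; the whole device is to trade the intractable supremum for the one-variable limit $\lim_n g(a_n)=\lim_n D(a_n,b_n)$, after which each metric axiom simply inherits its proof from the corresponding property of $D$.
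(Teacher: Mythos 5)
Your proposal is correct and follows essentially the same route as the paper: your Lemma A is the paper's Lemma \ref{leminv}, your Lemma B is precisely the paper's Lemma \ref{Lelemme2}, and parts $(1)$, $(3)$ and the metric axioms are handled identically via Proposition \ref{util} and the compactness of $(\Delta^+,\textnormal{w})$. The only cosmetic difference is in completeness: where the paper runs an explicit diagonal argument with quantitative estimates on the distribution functions, you select the approximating points $\delta_{a_n}$ via density and the metrizability of $(\Delta^+,\textnormal{w})$ and then transfer Cauchyness through the triangle inequality --- the same mechanism in a cleaner packaging.
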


\begin{lemma} \label{Lelemme} Let $(G,D,\star)$ be a probabilistic metric space (not assumed to be complete), where $\star$ is a continuous triangle function. Let $(a_n)\subset G$ be a Cauchy sequence. Then, there exists a probabilistic $1$-Lipschitz map $f: G\longrightarrow \Delta^+$ such that, for all $x\in G$ we have $D(a_n,x)\,{\xrightarrow {\textnormal{w}}}\, f(x)$, when $n\longrightarrow +\infty$. In particular, $f(a_n)\,{\xrightarrow {\textnormal{w}}}\, \mathcal{H}_{0}$, when $n\longrightarrow +\infty$.
\end{lemma}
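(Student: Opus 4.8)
The plan is to \emph{define} the candidate map pointwise as the weak limit of the distributions $D(a_n,x)$, and then check it has the required properties. For each fixed $x\in G$, consider the sequence $(D(a_n,x))_n$ in $\Delta^+$. By Sibley's result (\cite[Theorem 3.]{DS}) the space $(\Delta^+,\textnormal{w})$ is compact metrizable, so this sequence has weakly convergent subsequences; the issue is to show that the \emph{entire} sequence converges, so that one may unambiguously set $f(x):=\lim_n D(a_n,x)$ (the weak limit). Once this is done, $f(x)$ is a weak limit of a sequence of elements of $\Delta^+$ in the compact space $\Delta^+$, hence lies in $\Delta^+$, and $f:G\longrightarrow\Delta^+$ is a genuine map.

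The main obstacle, and the heart of the proof, is establishing convergence of $(D(a_n,x))_n$ for each fixed $x$. In a compact metric space it suffices to show that all subsequential limits coincide. So I would suppose $D(a_{\sigma(k)},x)\,{\xrightarrow {\textnormal{w}}}\, F$ and $D(a_{\rho(k)},x)\,{\xrightarrow {\textnormal{w}}}\, L$ along two subsequences and aim to prove $F=L$. Axiom $(iii)$ of Definition \ref{definition.M} together with the symmetry of $D$ gives, for every $k$,
$$D(a_{\sigma(k)},a_{\rho(k)})\star D(a_{\rho(k)},x)\leq D(a_{\sigma(k)},x).$$
Because $(a_n)$ is a Cauchy sequence and $\sigma(k),\rho(k)\to+\infty$, the first factor converges weakly to $\mathcal{H}_0$; the continuity of $\star$ then lets me pass to the limit via Proposition \ref{util}, yielding $\mathcal{H}_0\star L\leq F$, that is $L\leq F$ since $\mathcal{H}_0$ is the $\star$-identity. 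Exchanging the roles of $\sigma$ and $\rho$ gives $F\leq L$, hence $F=L$. This forces convergence of the full sequence and makes $f$ well defined.

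Next I would verify that $f\in Lip^1_\star(G,\Delta^+)$. For fixed $x,y\in G$, axiom $(iii)$ and symmetry give $D(x,y)\star D(a_n,y)\leq D(a_n,x)$ for all $n$. Letting $n\to+\infty$, the constant sequence $D(x,y)$, together with $D(a_n,y)\,{\xrightarrow {\textnormal{w}}}\, f(y)$ and $D(a_n,x)\,{\xrightarrow {\textnormal{w}}}\, f(x)$, satisfies the hypotheses of Proposition \ref{util}, so $D(x,y)\star f(y)\leq f(x)$, which is precisely the probabilistic $1$-Lipschitz inequality.

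Finally, for the ``in particular'' statement, I would invoke the metrizability half of Sibley's theorem: let $d_L$ denote the modified L\'evy metric inducing the weak topology on $\Delta^+$. Since $(a_n)$ is Cauchy, given $\varepsilon>0$ there is $N$ with $d_L(D(a_n,a_p),\mathcal{H}_0)<\varepsilon$ whenever $n,p\geq N$. Fixing $p\geq N$ and letting $n\to+\infty$, weak convergence $D(a_n,a_p)\,{\xrightarrow {\textnormal{w}}}\, f(a_p)$ is $d_L$-convergence, so continuity of $d_L$ gives $d_L(f(a_p),\mathcal{H}_0)\leq\varepsilon$ for all $p\geq N$. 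Hence $f(a_p)\,{\xrightarrow {\textnormal{w}}}\,\mathcal{H}_0$ as $p\to+\infty$, completing the proof.
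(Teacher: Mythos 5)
Your proof is correct, and it uses the same three ingredients as the paper (Sibley's compactness and metrizability of $(\Delta^+,\textnormal{w})$, the triangle inequality for $D$ combined with the Cauchy property, and Proposition \ref{util} to pass to limits), but it organizes them in a different and cleaner order. The paper first fixes, for each $x$, a subsequential limit $f(x)$ along some $\varphi_x$, then proves the $1$-Lipschitz inequality at the level of these subsequential limits --- which forces it to extract a second layer of subsequences $\psi_{x,y}$ and introduce auxiliary limits $g_x(y)$ satisfying $f(y)\leq g_x(y)$ and $D(y,x)\star g_x(y)\leq f(x)$ --- and only afterwards shows that all cluster points of $(D(a_n,x))_n$ coincide, so that the full sequence converges. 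You invert this: you first prove that any two subsequential limits $F,L$ of $(D(a_n,x))_n$ satisfy $L\leq F\leq L$ (exactly the paper's uniqueness argument), so $f(x):=\lim_n D(a_n,x)$ is well defined from the outset; the Lipschitz inequality then drops out in one application of Proposition \ref{util} to $D(x,y)\star D(a_n,y)\leq D(a_n,x)$, with no nested subsequences and no auxiliary maps $g_x(y)$. Your treatment of the final claim $f(a_n)\,{\xrightarrow {\textnormal{w}}}\,\mathcal{H}_0$ is also more careful than the paper's: the paper compresses it into the iterated-limit identity $\lim_n f(a_n)=\lim_n(\lim_p D(a_p,a_n))=\mathcal{H}_0$, whereas you make the limit interchange legitimate by working uniformly in the modified L\'evy metric $d_L$; the only step you leave implicit is the translation of the paper's pointwise joint-limit definition of a Cauchy sequence into the statement $d_L(D(a_n,a_p),\mathcal{H}_0)<\varepsilon$ for $n,p\geq N$, which is a standard consequence of metrizability of the weak topology. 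Net effect: same mathematical content, but your ordering eliminates the most technical part of the paper's argument.
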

\begin{proof} We know from \cite{DS} that $(\Delta^+,\textnormal{w})$ is a compact metrizable space. Thus, for each fixed point $x\in G$, the sequence $(D(a_n,x))$ has a convergent subsequence that is, there exists $f(x) \in \Delta^+$ and a strictly increasing map $\varphi_x : \N\longrightarrow \N$ such that $D(a_{\varphi_x(n)},x)\,{\xrightarrow {\textnormal{w}}}\, f(x)$ when $n\longrightarrow +\infty$. From the properties of $D$, we have for all $y\in G$,
\begin{eqnarray} \label{comp1}
D(a_{\varphi_y(n)},y)\star D(a_{\varphi_x(n)},a_{\varphi_y(n)}) \leq D(a_{\varphi_x(n)},y)
\end{eqnarray}
Also, for each $y\in G$ the sequence $(D(a_{\varphi_x(n)},y))$ has a convergent subsequence by the compactness of $(\Delta^+,\textnormal{w})$ that is, there exists $g_x(y)\in \Delta^+$ and there exists a strictly increasing map $\psi_{x,y} : \N\longrightarrow \N$ such that
\begin{eqnarray}\label{comp2}
D(a_{\varphi_x(\psi_{x,y}(n))},y)\,{\xrightarrow {\textnormal{w}}}\, g_x(y).
\end{eqnarray}
when $n\longrightarrow +\infty$. Using (\ref{comp1}), (\ref{comp2}), the continuity of $\star$, the fact that $(a_n)$ is Cauchy sequence and taking the limit (see Proposition \ref{util}), we get
\begin{eqnarray}\label{comp3}
f(y)=f(y)\star \mathcal{H}_0 \leq g_x(y).
\end{eqnarray}
On the other hand, we have 
\begin{eqnarray*}
D(y,x)\star D(a_{\varphi_x(\psi_{x,y}(n))}, y)&\leq&  D(a_{\varphi_x(\psi_{x,y}(n))}, x)\nonumber\\
\end{eqnarray*}
So by Proposition \ref{util},
\begin{eqnarray}\label{comp4'}
D(y,x)\star g_x(y)&\leq&  f(x)
\end{eqnarray}
Thus, from (\ref{comp3}) and (\ref{comp4'}) we have for all $x, y \in G$ 
\begin{eqnarray*}\label{comp4}
D(y,x)\star f(y) &\leq& f(x).
\end{eqnarray*}
This shows that the well defined map $f$ is a probabilistic $1$-Lipschitz map. Now, we are going to prove that $D(a_n,x)\,{\xrightarrow {\textnormal{w}}}\, f(x)$ for all $x\in G$, when $n \longrightarrow +\infty$. Indeed, let us fixe an $x\in G$. In order to prove that $D(a_n,x)\,{\xrightarrow {\textnormal{w}}}\, f(x)$, we need only to prove that the sequence $(D(a_n,x))$ has a unique cluster point, since $(\Delta^+,\textnormal{w})$ is a metrizable compact set. Let $(D(a_{\varphi(n)},x))$ be a convergent subsequence to some $\overline{f}(x)$. Let us prove that necessarily $\overline{f}(x)=f(x)$. Indeed, from the properties of $D$, we have that
\begin{eqnarray*}\label{comp5}
D(a_{\varphi_x(n)},x)\star D(a_{\varphi_x(n)},a_{\varphi(n)}) \leq D(a_{\varphi(n)},x).
\end{eqnarray*}
Since $D(a_{\varphi_x(n)},x)\,{\xrightarrow {\textnormal{w}}}\, f(x)$ and $(a_n)$ is a Cauchy sequence, then from Proposition \ref{util}
\begin{eqnarray*}\label{comp6}
f(x)=f(x)\star \mathcal{H}_0 \leq \overline{f}(x).
\end{eqnarray*}
In a similar way, we have 
\begin{eqnarray*}
D(a_{\varphi(n)},x)\star D(a_{\varphi(n)},a_{\varphi_x(n)}) \leq D(a_{\varphi_x(n)},x).
\end{eqnarray*}
Since $D(a_{\varphi(n)},x)\,{\xrightarrow {\textnormal{w}}}\, \overline{f}(x)$ and $(a_n)$ is a Cauchy sequence, then from Proposition \ref{util}
\begin{eqnarray*}
\overline{f}(x)=\overline{f}(x)\star \mathcal{H}_0\leq f(x).
\end{eqnarray*}
Thus $f(x)=\overline{f}(x)$ and so, $D(a_n,x)\,{\xrightarrow {\textnormal{w}}}\, f(x)$. Now, to see that $f(a_n)\,{\xrightarrow {\textnormal{w}}}\, \mathcal{H}_{0}$ (when $n\longrightarrow +\infty$), it suffices to observe that $\lim_n f(a_n)=\lim_n (\lim_p D(a_p,a_n))=\mathcal{H}_0$, since $(a_n)$ is a Cauchy sequence.
\end{proof}
\begin{lemma} \label{Lelemme2} Let $(G,D,\star)$ be a probabilistic metric space such that $\star$ is continuous. Let $f, g\in \Pi(G)$ and let $(a_n), (b_n)\subset G$ be any Cauchy sequences such that $D(a_n, x)\,{\xrightarrow {\textnormal{w}}}\, f(x)$, and $D(b_n,x) \,{\xrightarrow {\textnormal{w}}}\,g (x)$ for all $x\in G$, when $n\longrightarrow +\infty$. Then, $\lim_{n, m} D(a_n, b_m)$ exists and is independent from the choice of the Cauchy sequences $(a_n)$ and $(b_n)$. More precisely, we have that $$\sup_{x\in G} f(x)\star g(x)=\lim_{n, m} D(a_n, b_m)=\lim_{n} D(a_n, b_n)=\lim_m f(b_m)=\lim_n g(a_n).$$
\end{lemma}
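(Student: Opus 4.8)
The plan is to show that all five quantities coincide with the single distribution $F:=\sup_{x\in G} f(x)\star g(x)$, which depends only on $f$ and $g$; independence from the choice of Cauchy sequences then comes for free. Throughout I will use that $(\Delta^+,\textnormal{w})$ is a compact metrizable space, so a sequence converges as soon as it has a unique cluster point, and I will pass to limits inside $\star$-inequalities by invoking Proposition \ref{util} together with the continuity of $\star$. I first record the elementary facts I will lean on. By hypothesis $D(a_n,b_m)\,{\xrightarrow {\textnormal{w}}}\, f(b_m)$ as $n\to\infty$ (for fixed $m$) and, by symmetry of $D$, $D(a_n,b_m)=D(b_m,a_n)\,{\xrightarrow {\textnormal{w}}}\, g(a_n)$ as $m\to\infty$ (for fixed $n$); moreover $f(a_n)\,{\xrightarrow {\textnormal{w}}}\,\mathcal{H}_0$ and $g(b_m)\,{\xrightarrow {\textnormal{w}}}\,\mathcal{H}_0$ by Lemma \ref{Lelemme} (equivalently, directly, since $f(a_n)=\lim_p D(a_p,a_n)$ and $(a_n)$ is Cauchy).

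Next I would prove $\lim_m f(b_m)=F$ (and symmetrically $\lim_n g(a_n)=F$). Let $\ell$ be any cluster point of $(f(b_m))$. Evaluating the trivial bound $F\ge f(b_m)\star g(b_m)$ along the relevant subsequence and letting $m\to\infty$, using $g(b_m)\,{\xrightarrow {\textnormal{w}}}\,\mathcal{H}_0$ and Proposition \ref{util}, yields $\ell=\ell\star\mathcal{H}_0\le F$. For the reverse inequality I use that $f$ is $1$-Lipschitz: for every $x\in G$ one has $D(b_m,x)\star f(x)\le f(b_m)$, and since $D(b_m,x)\,{\xrightarrow {\textnormal{w}}}\, g(x)$, passing to the limit gives $g(x)\star f(x)\le\ell$; taking the supremum over $x$ gives $F\le\ell$. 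Hence $\ell=F$, and compactness of $(\Delta^+,\textnormal{w})$ forces $\lim_m f(b_m)=F$.

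Then I would establish, for all $n,m$, the two pointwise inequalities
\begin{eqnarray*}
f(a_n)\star f(b_m)\le D(a_n,b_m), \qquad f(a_n)\star D(a_n,b_m)\le f(b_m),
\end{eqnarray*}
obtained from the triangle inequalities $D(a_n,a_p)\star D(a_p,b_m)\le D(a_n,b_m)$ and $D(a_p,a_n)\star D(a_n,b_m)\le D(a_p,b_m)$ by letting $p\to\infty$ (using $D(a_p,a_n)\,{\xrightarrow {\textnormal{w}}}\, f(a_n)$, $D(a_p,b_m)\,{\xrightarrow {\textnormal{w}}}\, f(b_m)$, the continuity of $\star$, and Proposition \ref{util}). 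With these in hand the double limit is immediate: if $D(a_{n_k},b_{m_k})\,{\xrightarrow {\textnormal{w}}}\, L$ along any subsequence with $n_k,m_k\to\infty$, the first inequality together with $f(a_{n_k})\,{\xrightarrow {\textnormal{w}}}\,\mathcal{H}_0$ and $f(b_{m_k})\,{\xrightarrow {\textnormal{w}}}\, F$ gives $F\le L$, while the second gives $L\le F$; thus $L=F$. Since every joint cluster point equals $F$, compactness yields $\lim_{n,m}D(a_n,b_m)=F$; restricting to the diagonal gives $\lim_n D(a_n,b_n)=F$, and the same double limit recovers $\lim_m f(b_m)=F$ and $\lim_n g(a_n)=F$ as its partial limits. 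As $F=\sup_{x\in G} f(x)\star g(x)$ involves only $f$ and $g$, all limits are independent of the chosen Cauchy sequences.

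The principal subtlety is organizing everything around cluster points in the compact space $(\Delta^+,\textnormal{w})$ and being scrupulous when exchanging limits with $\star$: each such passage must match the exact hypotheses of Proposition \ref{util}, namely three weakly convergent sequences and a $\star$-inequality holding term by term. The only genuinely new point beyond these routine manipulations is the double-limit statement, where I must use the standard fact that a double sequence in a compact metric space all of whose joint cluster points equal $F$ necessarily converges to $F$.
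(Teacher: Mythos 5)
Your proof is correct, and its first half is exactly the paper's argument: both of you show $\lim_m f(b_m)=\sup_{x\in G}f(x)\star g(x)$ by taking an arbitrary cluster point $\ell$ of $(f(b_m))$ in the compact metrizable space $(\Delta^+,\textnormal{w})$, bounding it above via $f(b_m)\star g(b_m)\le \sup_{x}f(x)\star g(x)$ together with $g(b_m)\,{\xrightarrow {\textnormal{w}}}\,\mathcal{H}_0$, and below via the $1$-Lipschitz inequality $D(b_m,x)\star f(x)\le f(b_m)$ with $D(b_m,x)\,{\xrightarrow {\textnormal{w}}}\,g(x)$, each passage to the limit going through Proposition \ref{util}. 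Where you genuinely diverge is the double limit. The paper computes the two iterated limits $\lim_m(\lim_n D(a_n,b_m))$ and $\lim_n(\lim_m D(a_n,b_m))$, identifies both with the supremum, then simply writes $\lim_{n,m}D(a_n,b_m)$ for this common value and dismisses the diagonal equality $\lim_{n,m}D(a_n,b_m)=\lim_n D(a_n,b_n)$ with ``it is not difficult to see''. You instead derive the two sandwich inequalities $f(a_n)\star f(b_m)\le D(a_n,b_m)$ and $f(a_n)\star D(a_n,b_m)\le f(b_m)$ (by letting $p\to\infty$ in triangle inequalities, again via Proposition \ref{util}), and use them to show that every joint cluster point of the double sequence equals $F$, so that the genuine double limit exists by compactness and the diagonal limit drops out as a special case. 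This buys real rigor at the one point where the paper is loose: equality of the two iterated limits does not in general imply existence of a double limit (e.g. $x_{n,m}=nm/(n^2+m^2)$ has both iterated limits $0$ but equals $1/2$ on the diagonal), so an argument like your sandwich inequalities, which exploit the Cauchy property through $f(a_n)\,{\xrightarrow {\textnormal{w}}}\,\mathcal{H}_0$, is genuinely needed to justify the statement as written.
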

\begin{proof} Let $(a_n), (b_n)\subset G$ be any Cauchy sequences such that $D(a_n, x)\,{\xrightarrow {\textnormal{w}}}\, f(x)$, and $D(b_n,x) \,{\xrightarrow {\textnormal{w}}}\,g (x)$ for all $x\in G$, when $n\longrightarrow +\infty$. In particular we have that $f(a_n)\,{\xrightarrow {\textnormal{w}}}\, \mathcal{H}_0$ and $g(b_n)\,{\xrightarrow {\textnormal{w}}}\, \mathcal{H}_0$, when $n \longrightarrow +\infty$ (see Lemma \ref{Lelemme}). Since $f$ is a probabilistic $1$-Lipschitz then, for all $m\in \N$
\begin{eqnarray*}
f(x)\star D(b_m,x)\leq& f(b_m).
\end{eqnarray*}
We prove that $\lim_m f(b_m)=\sup_{x\in G}f(x)\star g(x)$. To do this, it suffices to show that every convergent subsequence of $(f(b_m))$ converges to $\sup_{x\in G}f(x)\star g(x)$ since $(\Delta^+,\textnormal{w})$ is a compact metrizable space. Indeed, let $(f_{\varphi(m)})$ be a convergent subsequence to some $F\in \Delta^+$. By using the continuity of $\star$ and taking the limit over $m\in \N$ in the above inequality (using Proposition \ref{util}), we get that for all $x\in G$, $f(x)\star g(x) \leq F$.  So,
$$\sup_{x\in G}f(x)\star g(x) \leq F.$$
On the other hand, we have $f(b_{\varphi(m)})\star g(b_{\varphi(m)})\leq \sup_{x\in G} f(x)\star g(x)$. Using Proposition \ref{util}, the continuity of $\star$ and the fact that $g(b_{\varphi(m)})\,{\xrightarrow {\textnormal{w}}}\, \mathcal{H}_0$ (when $m \longrightarrow +\infty$), we obtain that 
$$F\leq \sup_{x\in G} f(x)\star g(x).$$ Thus, $\lim_m f(b_m)=F=\sup_{x\in G} f(x)\star g(x)$. Finally, we have that $$\sup_{x\in G} f(x)\star g(x)=\lim_m f(b_m)=\lim_m(\lim_n D(a_n,b_m)).$$ Using the same arguments by replacing $f$ by $g$ we also get that 
$$\sup_{x\in G} f(x)\star g(x)=\lim_n g(a_n)=\lim_n(\lim_m D(a_n,b_m)).$$
Hence,
\begin{eqnarray*}
\sup_{x\in G} f(x)\star g(x)&=&\lim_mf(b_m)\\
                                 &=&\lim_n g(a_n)\\
                                 &=&\lim_{n,m} D(a_n,b_m).
\end{eqnarray*}
Finally, it is not difficult to see that $\lim_{n,m} D(a_n,b_m)=\lim_{n} D(a_n, b_n)$ by using the fact that $(a_n), (b_n)\subset G$ are Cauchy sequences. This finish the proof.

\end{proof}
\vskip5mm
Now, we give the proof of Theorem \ref{thmc}.

\begin{proof}[Proof of Theorem \ref{thmc}] $(1)$ Let $a, b \in G$. We have 
\begin{eqnarray*}
\D(\delta_a,\delta_b)&=&\sup_{x\in G} \delta_a(x)\star \delta_b(x) \\
                        &=& \sup_{x\in G} D(x,a)\star D(x,b)\\
                        &\leq& \sup_{x\in G} D(a,b)\\
                        &=& D(a,b).
\end{eqnarray*}
On the other hand, 
\begin{eqnarray*}
\D(\delta_a,\delta_b)&=&\sup_{x\in G} \delta_a(x)\star \delta_b(x) \\
                        &\geq& \delta_a(b)\star \delta_b(b)\\
                        &=& D(a,b)\star \mathcal{H}_0 \\ 
                        &=& D(a,b).
\end{eqnarray*}
Thus, $\D(\delta_a,\delta_b)=D(a,b)$, for all $a,b\in G$.
\vskip5mm
$(2)$ This part is given by the following two steps.
\vskip5mm
\noindent {\it Step 1:} $(\Pi(G), \D,\star)$ is a probabilistic metric space. Indeed, 

$(i)$ First, we prove that $\D(f,f)=\mathcal{H}_0$ for all $f\in \Pi(G)$. By Lemma \ref{Lelemme2} we have $\D(f,f)=\lim_{n,m} D(a_n,a_m)=\mathcal{H}_0$, where $(a_n)$ is a Cauchy sequence such that $D(a_n, x)\,{\xrightarrow {\textnormal{w}}}\, f(x)$ for all $x\in G$. Conversely, let $f, g \in \Pi(G)$ and suppose that
$\D(f,g)= \mathcal{H}_0$. There exists a Cauchy sequence $(a_n)\subset G$, such that $D(a_n, x)\,{\xrightarrow {\textnormal{w}}}\, f(x)$ for all $x\in G$. Since $g$ is a probabilistic $1$-Lipschitz map, then $D(a_n,x)\star g(x) \leq g(a_n)\leq \mathcal{H}_0$. Let $(g(a_{\varphi(n)}))$ be any (weak) convergent subsequence to some limit $L\in \Delta^+$. Since, $f(x)\star g(x)=\lim_n(D(a_n,x)\star g(x))=\lim_n(D(a_{\varphi(n)},x)\star g(x))$, we have (by Proposition \ref{util}) $$f(x)\star g(x)\leq L\leq \mathcal{H}_0.$$ 
Taking the supremum over $x\in G$, we get that 
$$\mathcal{H}_0=\D(f,g)=\sup_{x\in G} f(x)\star g(x)\leq L\leq \mathcal{H}_0.$$
In other words, $\lim_n g(a_{\varphi(n)})= \mathcal{H}_0$ for any convergent subsequence. Since $(\Delta^+,\textnormal{w})$ is a compact metrizable space, we get that $\lim_n g(a_n)=\mathcal{H}_0$. Now, since $g$ is a probabilistic $1$-Lipschitz map, then $D(a_n,x)\star g(a_n) \leq g(x)$ and so by taking the limit and using Proposition \ref{util} we obtain that $f(x)=f(x)\star \mathcal{H}_0\leq g(x)$. Using a similar arguments, we also have that $g(x)\leq f(x)$. Hence, $f=g$.  

$(ii)$ it is clear that $\D(f,g)=\D(g,f)$ for all $f, g \in \Pi(G)$.

$(iii)$ Let $f, g, h \in \Pi(G)$ and let us prove that $\D(f,g)\star \D(g,h)\leq \D(f,h)$. There exists a Cauchy sequences $(a_n), (b_n), (c_n)\subset G$ such that $D(a_n,x)\,{\xrightarrow {\textnormal{w}}}\, f(x)$, $D(b_n,x)\,{\xrightarrow {\textnormal{w}}}\, g(x)$ and $D(c_n,x)\,{\xrightarrow {\textnormal{w}}}\, h(x)$ for all $x\in G$. Then, using the properties of $D$, the continuity of $\star$ and Lemma \ref{Lelemme2}, we have

\begin{eqnarray*}
(\sup_{x\in G} f(x)\star g(x))\star (\sup_{x\in G} g(x)\star h(x))&=& \lim_{n,m}D(a_n,b_m)\star \lim_{m,p}D(b_m,c_p)\\
                                                                  &=& \lim_{n,m, p} (D(a_n,b_m)\star D(b_m,c_p))\\
                                                                  &\leq& \lim_{n, p} D(a_n,c_p)\\
                                                                  &=& \sup_{x\in G} f(x)\star h(x)
\end{eqnarray*}
Hence, $\D(f,g)\star \D(g,h)\leq \D(f,h)$.
\vskip5mm
\noindent {\it Step 2:} The probabilistic metric space $(\Pi(G),\D,\star)$ is complete. Let $(f_k)\subset \Pi(G)$ be a Cauchy sequence. For each $k\in \N$, there exists a Cauchy sequence $(a_{(n,k)})_{n\in \N} \subset G$ such that $D(a_{(n,k)}, x)\,{\xrightarrow {\textnormal{w}}}\, f_k(x)$, when $n\longrightarrow +\infty$ for all $x\in G$. This also implies that for all $k\in \N$
\begin{eqnarray}\label{cocomp1}
\D(\delta_{a_{(n,k)}},f_k)&=&f_k(a_{(n,k)})\,{\xrightarrow {\textnormal{w}}}\,\mathcal{H}_0,
\end{eqnarray}
when $n\longrightarrow +\infty$. Thus, for all $k\in \N$, there exists $N_{k}\in \N$ such that for all $n\geq N_{k}$
\begin{eqnarray}\label{cocomp7}
1-\frac{1}{k} \leq \D(\delta_{a_{(n,k)}},f_k)(\frac{1}{k})=f_k(a_{(n,k)})(\frac{1}{k})
\end{eqnarray}

Since $\D$ is a probabilistic metric, we have that for all $n,m, k,p\in \N$
\begin{eqnarray}\label{cocomp2}
\D(\delta_{a_{(n,k)}},f_k)\star \D(f_k, f_{p})\star \D(f_{p}, \delta_{a_{(m,p)}})&\leq& \D(\delta_{a_{(n,k)}},\delta_{a_{(m,p)}})\\
                                                                     &=& D(a_{(n,k)},a_{(m,p)}).\nonumber
\end{eqnarray}
Using Proposition \ref{util}, Lemma \ref{Lelemme2}, the above inequality and (\ref{cocomp1}), we get for all $k,p\in \N^*$

\begin{eqnarray*}
\D(f_k, f_{p})&\leq& \lim_{n,m}D(a_{(n,k)},a_{(m,p)}).\nonumber
\end{eqnarray*}

We can choose a map $\varphi :\N\longrightarrow \N$ such that $\varphi(k)\geq N_{k}$ for all $k\in \N^*$ and such that for all $k,p, s \in \N^*$,
\begin{eqnarray*}
\D(f_k, f_{p})(\frac{1}{s})\leq \lim_{n,m}(D(a_{(n,k)},a_{(m,p)})(\frac{1}{s})) \leq D(a_{(\varphi(k),k)},a_{(\varphi(p),p)})(\frac{1}{s})+(\frac{1}{k}+\frac{1}{p}).\nonumber
\end{eqnarray*}
Since, elements of $\Delta^+$ are increasing, for all $t>0$, there exists an integer number $s_t\in \N$ such that $\frac{1}{s_t}\leq t$ and
\begin{eqnarray*}
\D(f_k, f_{p})(\frac{1}{s_t})-(\frac{1}{k}+\frac{1}{p})&\leq& D(a_{(\varphi(k),k)},a_{(\varphi(p),p)})(\frac{1}{s_t})\nonumber\\
                                                       &\leq& D(a_{(\varphi(k),k)},a_{(\varphi(p),p)})(t).
\end{eqnarray*}
By sending $k, p$ to $+\infty$, using the fact that $(f_k)$ is Cauchy we have for all $t>0$
\begin{eqnarray*}
1=\lim_{k,p\rightarrow +\infty} \D(f_k, f_{p})(\frac{1}{s_t})=\lim_{k,p\rightarrow +\infty} D(a_{(\varphi(k),k)},a_{(\varphi(p),p)})(t).
\end{eqnarray*}
In other words, the sequence $(a_{(\varphi(k),k)})_{k\in \N}$ is Cauchy in $G$. Using Lemma \ref{Lelemme}, there exists a Probabilistic $1$-Lipschitz map $f$ such that, for all $x\in G$ we have $D(a_{(\varphi(k),k)}, x)\,{\xrightarrow {\textnormal{w}}}\, f(x)$ and in particular $f(a_{(\varphi(k),k)})\,{\xrightarrow {\textnormal{w}}}\, \mathcal{H}_0$, when $k\longrightarrow +\infty$. This implies that $f\in\Pi(G)$ and
\begin{eqnarray}\label{cocomp5}
\D(\delta_{a_{(\varphi(k),k)}},f)=f(a_{(\varphi(k),k)}) \,{\xrightarrow {\textnormal{w}}}\,\mathcal{H}_0, \textnormal{ when } k\longrightarrow +\infty.
\end{eqnarray}
Since $\D$ is a probabilistic metric on $\Pi(G)$, we have  
\begin{eqnarray}\label{cocomp6}
 \D(f_k,\delta_{a_{(\varphi(k),k)}})\star \D(\delta_{a_{(\varphi(k),k)}},f)&\leq& \D(f_k,f)\\
                        &\leq& \mathcal{H}_0.\nonumber
\end{eqnarray}
We know that:

$(a)$ on one hand, from (\ref{cocomp7}) (since $\varphi(k)\geq N_k$ for all $k$), we have that: $\forall t>0$, $\forall k\in \N^*$ such that $\frac{1}{t}\leq k$,
 $$1-\frac{1}{k} \leq \D(\delta_{a_{(\varphi(k),k)}},f_k)(\frac{1}{k})=f_k(a_{(\varphi(k),k)})(\frac{1}{k})\leq f_k(a_{(\varphi(k),k)})(t).$$
It follows that $\lim_{k\rightarrow +\infty} f_k(a_{(\varphi(k),k)})(t)=1$ for all $t>0$. In other words, $$\D(f_k,\delta_{a_{(\varphi(k),k)}})=f_k(a_{(\varphi(k),k)})\,{\xrightarrow {\textnormal{w}}}\,\mathcal{H}_0,$$ when $k\rightarrow +\infty$.

$(b)$ on the other hand, we know from (\ref{cocomp5}) that $\D(\delta_{a_{(\varphi(k),k)}},f)\,{\xrightarrow {\textnormal{w}}}\,\mathcal{H}_0$, when $k\longrightarrow +\infty.$
\vskip5mm
Hence, using $(a)$, $(b)$ and (\ref{cocomp6}) (with Proposition \ref{util}), we get that $\D(f_k,f)\,{\xrightarrow {\textnormal{w}}}\,\mathcal{H}_0$, when $k\longrightarrow +\infty.$ This finish the proof of $(2)$.
\vskip5mm
$(3)$ Since $(\Pi(G), \D)$ is complete and $\mathcal{G}(G)\subset \Pi(G)$, clearly $\overline{\mathcal{G}(G)}^{\D}\subset \Pi(G)$. For the converse, let $f\in \Pi(G)$. Thus, there exist a Cauchy sequence $(a_n)\subset G$ such that $D(a_n, x)\,{\xrightarrow {\textnormal{w}}}\, f(x)$ for all $x\in G$, in particular $f(a_n)\,{\xrightarrow {\textnormal{w}}}\, \mathcal{H}_0$, when $n\rightarrow +\infty$ (since $(a_n)$ is a Cauchy sequence). Hence, since $f$ is $1$-Lipschitz, we have $$\D(\delta_{a_n}, f)=\sup_{x\in G} D(a_n,x)\star f(x)=f(a_n),$$
which shows that $\D(\delta_{a_n}, f)\,{\xrightarrow {\textnormal{w}}}\, \mathcal{H}_0$. In other words, $f\in \overline{\mathcal{G}(G)}^{\D}$. Thus, $\Pi(G)= \overline{\mathcal{G}(G)}^{\D}$ and so $\mathcal{G}(G)$ is dense in $(\Pi(G), \D,\star)$. 
\end{proof}
In the group framework, we have the following theorem.
\begin{theorem} \label{corc} Let $(G,\cdot,D,\star)$ be a probabilistic invariant metric group having $e$ as identity element, where $\star$ is a continuous triangle function. Then $(\Pi(G), \odot, \D,\star)$ is a probabilistic invariant complete metric group having $\delta_e$ as identity element.
\end{theorem}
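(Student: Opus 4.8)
The plan is to exploit the fact that, by Theorem \ref{thmc}, $(\Pi(G),\D,\star)$ is already a complete probabilistic metric space in which $\mathcal{G}(G)=\delta(G)$ is dense, and that on $\mathcal{G}(G)$ the operation $\odot$ reproduces the group law of $G$ via $\delta_a\odot\delta_b=\delta_{ab}$ (Proposition \ref{prop10}). Completeness is thus granted, and it remains to promote the group structure of $G$ to all of $\Pi(G)$ and to check invariance of $\D$. First I would record the elementary identity $D(x,y)=D(x^{-1},y^{-1})$ (from the two invariances and symmetry of $D$) and the estimate, for $a,b,a',b'\in G$,
\[
D(ab,a'b')\ \geq\ D(a,a')\star D(b,b'),
\]
obtained from the triangle inequality together with left and right invariance. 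Applied along representing Cauchy sequences, this shows that whenever $(a_n),(b_n)\subset G$ are Cauchy then so are $(a_nb_n)$ and $(a_n^{-1})$, the latter since $D(a_n^{-1},a_m^{-1})=D(a_n,a_m)$.

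The heart of the argument, and the step I expect to be the main obstacle, is to identify the sup-convolution of two elements of $\Pi(G)$ with a limit of the maps $\delta_{(\cdot)}$, using only continuity (not sup-continuity) of $\star$. Concretely, for $f,g\in\Pi(G)$ represented by Cauchy sequences $(a_n),(b_n)$ (so $D(a_n,x)\xrightarrow{\textnormal{w}}f(x)$, $D(b_n,x)\xrightarrow{\textnormal{w}}g(x)$ for all $x$, and $f(a_n),g(b_n)\xrightarrow{\textnormal{w}}\mathcal{H}_0$ by Lemma \ref{Lelemme}), I would prove that $(f\odot g)(x)=\lim_n D(a_nb_n,x)=:h(x)$ for every $x\in G$, i.e. $f\odot g=\lim_n\delta_{a_nb_n}$. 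The inequality $f\odot g\leq h$ comes from testing the defining supremum $(f\odot g)(x)=\sup_y f(y)\star g(y^{-1}x)$ against the bound $D(a_n,y)\star D(b_m,y^{-1}x)\leq D(a_nb_m,x)$ (invariance plus triangle inequality) and passing to the limit via Proposition \ref{util}; the reverse inequality $h\leq f\odot g$ follows by choosing the test point $y=a_n$, since $g(a_n^{-1}x)=\lim_m D(a_nb_m,x)$ and $f(a_n)\xrightarrow{\textnormal{w}}\mathcal{H}_0$, whence $f(a_n)\star g(a_n^{-1}x)\to h(x)$ while staying below $(f\odot g)(x)$. Throughout one uses that $(a_nb_n)$ is Cauchy so that the relevant double limits of $D(a_nb_m,x)$ collapse onto $h(x)$, the compactness and metrizability of $(\Delta^+,\textnormal{w})$ keeping every weak limit under control. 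A symmetric check gives well-definedness: distinct representatives of $f$ (resp. $g$) satisfy $D(a_n,a_n')=\D(\delta_{a_n},\delta_{a_n'})\xrightarrow{\textnormal{w}}\mathcal{H}_0$, so the limit is independent of the chosen sequences.

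With the formula $f\odot g=\lim_n\delta_{a_nb_n}$ in hand, the remaining verifications reduce to the corresponding identities in $G$ transported along representing sequences. Closure of $\Pi(G)$ under $\odot$ is immediate since $(a_nb_n)$ is Cauchy; that $\delta_e$ is a two-sided identity follows from Proposition \ref{prop12} together with $\Pi(G)\subset Lip^1_\star(G,\Delta^+)$; associativity is inherited from $(a_nb_n)c_n=a_n(b_nc_n)$; and the inverse of $f$ is $\lim_n\delta_{a_n^{-1}}\in\Pi(G)$, for which $f\odot f^{-1}=\lim_n\delta_{a_na_n^{-1}}=\delta_e$. Finally, invariance of $\D$ is read off directly from Lemma \ref{Lelemme2}: since $(a_nc_n)$ and $(b_nc_n)$ represent $f\odot h$ and $g\odot h$,
\[
\D(f\odot h,\,g\odot h)=\lim_n D(a_nc_n,b_nc_n)=\lim_n D(a_n,b_n)=\D(f,g),
\]
and symmetrically on the left, using the invariance of $D$. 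Combined with the completeness from Theorem \ref{thmc}$(2)$, this shows that $(\Pi(G),\odot,\D,\star)$ is a probabilistic invariant complete metric group with identity $\delta_e$, into which $\delta:G\to\Pi(G)$ embeds $G$ as a dense isometric subgroup.
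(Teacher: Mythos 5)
Your proposal is correct, and its skeleton coincides with the paper's: completeness is imported from Theorem \ref{thmc}, closure of $\Pi(G)$ under $\odot$ is obtained by identifying $f\odot g$ with $\lim_n\delta_{a_nb_n}$ along representing Cauchy sequences, the inverse of $f$ is built from the inverted Cauchy sequence $(a_n^{-1})$, and invariance of $\D$ is read off from Lemma \ref{Lelemme2}. Two points of execution differ, one cosmetic and one substantive. Cosmetically: for the product formula the paper does not redo a two-sided estimate as you do; it notes that $h_x(y):=g(y^{-1}x)$ is itself represented by the Cauchy sequence $(xb_n^{-1})$, since $D(y,xb_n^{-1})=D(b_n,y^{-1}x)$, and then applies Lemma \ref{Lelemme2} to the pair $f$, $h_x$ to get $(f\odot g)(x)=\lim_n D(a_n,xb_n^{-1})=\lim_n D(a_nb_n,x)$ in one stroke; your hand-made version via Proposition \ref{util} proves the same identity, at the cost of checking the double-limit collapse explicitly. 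Substantively: for associativity the paper cites Proposition \ref{prop13}, whose hypothesis is sup-continuity of $\star$, which Theorem \ref{corc} does not assume — so the paper's proof as written leans on an assumption it does not have. Your route, deducing associativity on $\Pi(G)$ from the representation formula together with $(a_nb_n)c_n=a_n(b_nc_n)$ in $G$, needs only continuity of $\star$ and is therefore the argument that actually matches the theorem's stated hypotheses; likewise your derivation of $f\odot f^{-1}=\delta_e$ by reusing the product formula (the product of representatives being the constant sequence $e$) replaces the paper's separate two-inequality computation. In short: same strategy, but your treatment of associativity and inverses is more self-contained and, on the associativity point, repairs a small inconsistency in the paper's own proof.
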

\begin{proof} The fact that $(\Pi(G),\D,\star)$ is a probabilistic complete metric space is given by Theorem \ref{thmc}. Let us prove that $(\Pi(G), \odot)$ is a group having $\delta_e$ as identity element. The associative property is given by Proposition \ref{prop13}. We prove that $f\odot g \in \Pi(G)$, whenever $f, g \in\Pi(G)$. Indeed, from Lemma \ref{Lelemme}, there exists Cauchy sequences $(z_n), (s_n)\subset G$ such that $D(z_n,x) \,{\xrightarrow {\textnormal{w}}}\, f(x)$ and $D(s_n,x) \,{\xrightarrow {\textnormal{w}}}\, g(x)$ for all $x\in G$, when $n \longrightarrow +\infty$. We prove that $(z_n s_n)$ is a Cauchy sequence and that $D(z_ns_n,x)\,{\xrightarrow {\textnormal{w}}}\, (f\odot g)(x)$ for all $x\in G$, when $n \longrightarrow +\infty$. Indeed, using the invariance of $D$ we have, for all $n, p \in \N$,
$$ D(s_n,s_p)\star D(z_n,z_p)=D(z_ns_n,z_ns_p)\star D(z_ns_p,z_ps_p)\leq D(z_ns_n,z_ps_p).$$
This implies that $(z_n s_n)$ is a Cauchy sequence. Let us set $h_x(y):=g(y^{-1}x)$ for all $x, y\in G$. The sequence $(xs^{-1}_n)$ is also a Cauchy sequence and we have $D(y,xs^{-1}_n)=D(s_n,y^{-1}x) \,{\xrightarrow {\textnormal{w}}}\, g(y^{-1}x)=h_x(y)$ for all $x,y\in G$, when $n \longrightarrow +\infty$. Using Lemma \ref{Lelemme2} with $f$ and $h_x$ for each $x\in G$, we have that
\begin{eqnarray}
f\odot g (x)=\sup_{y\in G} f(y)\star h_x(y)=\lim_{n} D(z_n,xs^{-1}_n)=\lim_n D(z_ns_n,x).
\end{eqnarray}
Hence, $(z_n s_n)$ is a Cauchy sequence and $D(z_ns_n,x)\,{\xrightarrow {\textnormal{w}}}\, f\odot g(x)$ for all $x\in G$, when $n\longrightarrow +\infty $. This shows that $f\odot g \in \Pi(G)$.
\vskip5mm
Now let $f\in \Pi(G)$ and let us prove that there exists $g\in \Pi(G)$ such that $f\odot g=g\odot f= \delta_e$. From the definition of $\Pi(G)$ there exists a Cauchy sequence $(z_n)\subset G$ such that $D(z_n,x) \,{\xrightarrow {\textnormal{w}}}\, f(x)$ for all $x\in G$, in particular $f(z_n)\,{\xrightarrow {\textnormal{w}}}\, \mathcal{H}_0$, when $n \longrightarrow +\infty$. Since the probabilistic metric $D$ is invariant, then $(z_n^{-1})\subset G$ is also a Cauchy sequence. By Lemma \ref{Lelemme}, there exists a probabilistic $1$-Lipschitz map $g$ such that  $D(z^{-1}_n,x) \,{\xrightarrow {\textnormal{w}}}\, g(x)$ for all $x\in G$, when $n \longrightarrow +\infty$. In particular $g\in \Pi(G)$. We show that $f\odot g=\delta_e$. Indeed, on one hand we have for all $x,y \in G$
\begin{eqnarray*}
D(z_n,y)\star D(z_n^{-1},y^{-1}x)&=& D(e,yz^{-1}_n)\star D(yz^{-1}_n,x)\leq D(e,x)=\delta_e(x).
\end{eqnarray*}
Thus, by using Proposition \ref{util} we have that for all $x, y \in G$, $f(y)\star g(y^{-1}x)\leq \delta_e(x)$ and hence $f\odot g(x) \leq \delta_e(x)$. On the other hand, we know that $f(z_n)\,{\xrightarrow {\textnormal{w}}}\, \mathcal{H}_0$, when $n \longrightarrow +\infty$. Also we have that $g(z^{-1}_n x)\,{\xrightarrow {\textnormal{w}}}\, \delta_e(x)$, when $n \longrightarrow +\infty$. Indeed, $$\lim_n g(z^{-1}_n x)=\lim_n(\lim_p D(z^{-1}_p,z^{-1}_n x))=\lim_n(\lim_p D(z_n z^{-1}_p, x))=D(e,x)=\delta_e(x).$$
Since $f(z_n)\star g(z^{-1}_n x)\leq f\odot g(x)$, then by using Proposition \ref{util} we get that $\delta_e(x)\leq f\odot g(x)$ for all $x\in G$. Finally, we proved that $f\odot g =\delta_e$ and in a similar way we have that $g\odot f= \delta_e$.
\vskip5mm
Now, let us show that $\D$ is invariant. Indeed, let $f, g, h\in \Pi(G)$. So, there exists $(z_n), (s_n), (r_n)\subset G$ Cauchy sequences such that $D(z_n,x)\,{\xrightarrow {\textnormal{w}}}\, f(x)$, $D(s_n,x)\,{\xrightarrow {\textnormal{w}}}\, g(x)$ and $D(r_n,x)\,{\xrightarrow {\textnormal{w}}}\, h(x)$ , for all $x\in G$, when $n\longrightarrow +\infty$. We proved above that $f\odot h, g\odot h\in \Pi(G)$ and $D(z_n r_n,x) \,{\xrightarrow {\textnormal{w}}}\, f\odot h(x)$ and $D(s_n r_n,x) \,{\xrightarrow {\textnormal{w}}}\, g\odot h(x)$ for all $x \in G$, when $n\longrightarrow +\infty$. From Lemma \ref{Lelemme2} and the invariance of $D$, we have that 
\begin{eqnarray}
\D(f\odot h, g\odot h)=\lim_n D(z_nr_n,s_nr_n)=\lim_n D(z_n,s_n)=\D(f,g).
\end{eqnarray}
In a similar way we prove that $\D(h\odot f, h\odot g)=\D(f,g)$. This complete the proof. 
\end{proof}
In the following corollary, we give as consequence the completion of probabilistic metric space (a result proved by H. Sherwood in \cite{SHE1}) and also give the new result concerning the completion of probabilistic invariant metric group.
\begin{corollary} \label{corcomp} Every probabilistic metric space (resp. Every probabilistic invariant metric group) $(G,D,\star)$, where $\star$ is  a continuous triangle function, has a probabilistic metric completion (resp. has a probabilistic invariant metric group completion). More precisely, the space $(\Pi(G),\D, \star)$ (resp.  the group $(\Pi(G),\odot, \D, \star)$) is the probabilistic completion (resp. the probabilistic group completion) of $(G,D,\star)$.
\end{corollary}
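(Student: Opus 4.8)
The plan is to recognize that Corollary \ref{corcomp} is essentially a packaging of the two main theorems of the section, so the proof is short: it consists of verifying that the embedding $\delta$ is an isometry (in the group case, an isometric isomorphism onto its image), that its range is dense, and that the target space is complete. All three ingredients are already established, so the main work is to assemble them correctly and to articulate precisely what ``completion'' means in each of the two parallel statements.

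For the metric-space statement, I would argue as follows. Let $(G,D,\star)$ be a probabilistic metric space with $\star$ continuous. By Proposition \ref{prop9} the map $\delta : G \longrightarrow \mathcal{G}(G) \subset \Pi(G)$ is injective, and by part $(1)$ of Theorem \ref{thmc} it is an isometry, since $\D(\delta_a,\delta_b)=D(a,b)$ for all $a,b\in G$. Thus $(G,D,\star)$ is isometric to $(\mathcal{G}(G),\D,\star)$. By part $(3)$ of Theorem \ref{thmc} the image $\mathcal{G}(G)$ is dense in $(\Pi(G),\D,\star)$, and by part $(2)$ the ambient space $(\Pi(G),\D,\star)$ is a probabilistic complete metric space. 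A complete probabilistic metric space containing an isometric dense copy of $G$ is, by definition, a probabilistic metric completion of $G$; hence $(\Pi(G),\D,\star)$ is such a completion. The one point to spell out explicitly is that ``$\delta(G)$ dense in a complete space'' is exactly the completion property, so that no separate uniqueness or universal-property verification is demanded by the statement as phrased.

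For the group statement, I would run the same three-step argument but upgrade ``isometry'' to ``isometric isomorphism'' and ``complete metric space'' to ``complete invariant metric group.'' By Theorem \ref{thm1}, the map $\delta : (G,\cdot) \longrightarrow (Lip^1_\star(G,\Delta^+),\odot)$ is an injective group homomorphism (using Proposition \ref{prop10}, $\delta_a\odot\delta_b=\delta_{ab}$), and since $\delta_a,\delta_b\in\mathcal{G}(G)\subset\Pi(G)$ this homomorphism lands in $\Pi(G)$; combined with $\D(\delta_a,\delta_b)=D(a,b)$ it is an isometric group isomorphism of $G$ onto $\mathcal{G}(G)$. Theorem \ref{corc} then supplies what is new here: $(\Pi(G),\odot,\D,\star)$ is a probabilistic invariant complete metric group with identity $\delta_e$. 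Together with the density of $\mathcal{G}(G)$ from Theorem \ref{thmc}$(3)$, this exhibits $(\Pi(G),\odot,\D,\star)$ as a probabilistic invariant metric group completion of $(G,\cdot,D,\star)$.

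The main obstacle is not any single hard estimate — those have all been discharged in Theorems \ref{thmc} and \ref{corc} — but rather bookkeeping consistency: I must make sure the invariance of $\D$ proved in Theorem \ref{corc}, the group structure of Theorem \ref{thm1} (stated for $Lip^1_\star(G,\Delta^+)$ but used here restricted to the subgroup $\Pi(G)$), and the density of $\mathcal{G}(G)$ all refer to the \emph{same} embedded copy of $G$, namely $\mathcal{G}(G)=\delta(G)$. The only genuinely delicate point worth a sentence is that $\mathcal{G}(G)$ is indeed a subgroup (not merely a subset) of $(\Pi(G),\odot)$, which follows because $\delta_a\odot\delta_b=\delta_{ab}$ and $\delta_e$ is the identity, so that $\delta$ is a group isomorphism onto its image and the density/completeness statements genuinely concern a dense embedded subgroup inside a complete group.
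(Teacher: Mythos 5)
Your assembly of the existence part is exactly the paper's: the paper likewise cites Theorem \ref{thmc} (resp.\ Theorem \ref{corc}) for the isometric (resp.\ isometric homomorphism) embedding $\delta$, the completeness of $(\Pi(G),\D,\star)$, and the density of $\delta(G)=\mathcal{G}(G)$. The discrepancy is in what you explicitly set aside: you assert that ``no separate uniqueness or universal-property verification is demanded by the statement as phrased,'' whereas the paper's own proof treats uniqueness as a required step (``it remains to prove the uniqueness of the completion''), and this is what justifies the definite article in ``is \emph{the} probabilistic completion'' in the statement. The paper's uniqueness argument is the standard one: if $(X,\Lambda,\star)$ is any probabilistic complete metric space (resp.\ probabilistic invariant complete metric group) and $\varphi: G\longrightarrow X$ is an isometry (resp.\ isometric group homomorphism) with $\varphi(G)$ dense in $X$, then for each $f\in\Pi(G)$ one picks a Cauchy sequence $(a_n)\subset G$ with $\D(\delta_{a_n},f)\,{\xrightarrow {\textnormal{w}}}\,\mathcal{H}_0$, observes that $(\varphi(a_n))$ is Cauchy in the complete space $X$ and hence converges to some $g\in X$, and defines $T(f)=g$; one then checks that $T:\Pi(G)\longrightarrow X$ is a well-defined surjective isometry (resp.\ isometric isomorphism). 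So your proposal, as written, establishes that $(\Pi(G),\D,\star)$ \emph{is a} completion but does not fully license calling it \emph{the} completion; to match the paper you should add this (routine) uniqueness argument, or at least record that completions are unique up to isometry/isometric isomorphism by the density argument above. Everything else in your proposal, including the observation that $\mathcal{G}(G)$ is a genuine subgroup of $(\Pi(G),\odot)$ via $\delta_a\odot\delta_b=\delta_{ab}$ with identity $\delta_e$, is correct and consistent with the paper.
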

\begin{proof} We know from Theorem \ref{thmc} (resp. from Theorem \ref{corc}) that $\delta: (G,D,\star) \longrightarrow (\Pi(G), \D,\star)$ is an isometric embedding (resp. an isometric homomorphism embedding). We also know that $(\Pi(G), \D,\star)$ is complete and that $\delta(G)$ is dense in $(\Pi(G), \D,\star)$. To finish the proof, it remains to prove the uniqueness of the completion which is a classical and known fact. Suppose that $(X,\Lambda,\star)$ is a probabilistic complet metric space (resp. a probabilistic invariant complete metric group) and $\varphi : G\longrightarrow X$ is an isometry (resp. an isometric group homomorphism) such that $\varphi(G)$ is dense in $X$.  Let $f\in \Pi(G)$, then there exists a Cauchy sequence $(a_n)\subset G$ such that $\D(\delta_{a_n},f)\,{\xrightarrow {\textnormal{w}}}\, \mathcal{H}_0$. Since $(a_n)\subset G$ is a Cauchy sequence in $G$, then $(\varphi(a_n))$ is a Cauchy sequence in $(X,\Lambda,\star)$ which is complet. So there exists $g\in X$ such that $\Lambda(\varphi(a_n),g)\,{\xrightarrow {\textnormal{w}}}\, \mathcal{H}_0$. Now, we define the map $T:  \Pi(G) \longrightarrow X$ by $T(f)=g$. It is a well known fact that this map is a well defined surjective isometry (resp. isometric isomorphism).
\end{proof}

%
%
\section{\bf A Probabilistic Banach-Stone Type Theorem} \label{S4}
In this section, we will show that $Lip^1_\star(G,\Delta^+)$ can be provided with a probabilistic metric $\overline{\D}$, such that $(Lip^1_\star(G,\Delta^+),\odot,\overline{\D},\star)$ will be a probabilistic complete metric monoid in which the probabilistic invariant metric group $G$ embeds isimetricaly a group. Then, we caracterize the set of all invertible elements of $(Lip^1_\star(G,\Delta^+),\odot,\overline{\D},\star)$ (Theorem \ref{theorem2}) and prove a Banach-Stone type theorem (Theorem \ref{Ba-Sto}).
\subsection{The Probabilistic Complete Metric Space $(Lip^1_\star(G,\Delta^+),\overline{\D},\star)$.}
Given a probabilistic metric space $(G, D,\star)$, we build a probabilistic metric $\overline{\D}: Lip^1_\star(G,\Delta^+)\times Lip^1_\star(G,\Delta^+)\longrightarrow \Delta^+$ extending the probabilistic metric $\D$ defined on $\Pi(G)\times \Pi(G)$ as follows: 
\[\overline{\D} (f,g)=
\left\{
\begin{array}{rl}
&\D(f,g) \textnormal{ if } (f,g)\in \Pi(G)\times \Pi(G)\\
&\mathcal{H}_0 \textnormal{ if } f=g\\
&\mathcal{H}_{\infty} \textnormal{ if } f\neq g; (f,g)\in [\Pi(G)\times \Pi(G)]^C
\end{array}
\right.
\] 
where, the set $$[\Pi(G)\times \Pi(G)]^C:=(Lip^1_\star(G,\Delta^+)\times Lip^1_\star(G,\Delta^+))\setminus(\Pi(G)\times \Pi(G))$$ denotes the complement of $\Pi(G)\times \Pi(G)$ in $Lip^1_\star(G,\Delta^+)\times Lip^1_\star(G,\Delta^+)$.  
\vskip5mm
The interest of the probabilistic metric $\overline{\D}$ is that this allows to keep the distances between the points of $ G $ after embedding it in the space $Lip^1_\star(G,\Delta^+)$ via the map $\delta$. This will play an important role in Theorem \ref{Ba-Sto}.
\begin{proposition} \label{proposition.3} Let $(G,D,\star)$ be a probabilistic metric space such that $\star$ is continuous. Then, we have

$(1)$ the space $(Lip^1_\star(G,\Delta^+),\overline{\D},\star)$ is a probabilistic complete metric space in which, the probabilistic metric space $(G,D,\star)$ embeds isometrically via the map $$\delta: (G,D,\star) \longrightarrow (Lip^1_\star(G,\Delta^+), \overline{\D},\star).$$

$(2)$ if moreover $G$ is a probabilistic invariant metric group and $\star$ is sup-continuous, the map $$\delta: (G,\cdot) \longrightarrow (Lip^1_\star(G,\Delta^+), \odot)$$ is a group homomorphism
\end{proposition}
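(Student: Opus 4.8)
The plan is to verify the three defining conditions of a probabilistic metric space for $\overline{\D}$, then establish completeness, and finally check that $\delta$ is an isometry and a group homomorphism. First I would treat the symmetry axiom $(ii)$, which is immediate since $\D$ is symmetric (by Theorem \ref{thmc}) and the case distinctions defining $\overline{\D}$ are themselves symmetric in $f$ and $g$. For axiom $(i)$, the identity $\overline{\D}(f,f)=\mathcal{H}_0$ holds by the middle clause of the definition, and conversely $\overline{\D}(f,g)=\mathcal{H}_0$ forces $f=g$: if $(f,g)\in\Pi(G)\times\Pi(G)$ this follows from the corresponding property of $\D$ proved in Theorem \ref{thmc}, while if $(f,g)$ lies in the complement with $f\neq g$ then $\overline{\D}(f,g)=\mathcal{H}_\infty\neq\mathcal{H}_0$, a contradiction.

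The main obstacle is the triangle inequality $\overline{\D}(f,g)\star\overline{\D}(g,h)\leq\overline{\D}(f,h)$, which I would handle by a case analysis according to which pairs lie in $\Pi(G)\times\Pi(G)$. When all of $f,g,h$ belong to $\Pi(G)$ the inequality is exactly the triangle inequality for $\D$ already established. The delicate cases are those where at least one of the three maps lies outside $\Pi(G)$. Here the key observation is that whenever a pair $(u,v)$ with $u\neq v$ is \emph{not} in $\Pi(G)\times\Pi(G)$, the value $\overline{\D}(u,v)=\mathcal{H}_\infty$ is the minimal element of $(\Delta^+,\leq)$; since $\mathcal{H}_\infty$ is an absorbing element for $\star$ (because $\star$ is non-decreasing and $\mathcal{H}_\infty\star K\leq \mathcal{H}_\infty$ forces $\mathcal{H}_\infty\star K=\mathcal{H}_\infty$ using axiom $(v)$ and $K\leq\mathcal{H}_0$), any product of the form $\overline{\D}(f,g)\star\overline{\D}(g,h)$ containing such a factor equals $\mathcal{H}_\infty$ and is therefore below every distribution, in particular below $\overline{\D}(f,h)$. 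The only subtlety is to check that $f\neq h$ in these mixed cases so that the right-hand side is well defined by the appropriate clause; this is forced because if $f=h$ but $g$ were, say, outside $\Pi(G)$ while $f=h\in\Pi(G)$, then one factor on the left is $\mathcal{H}_\infty$ and the inequality still holds trivially since the left-hand side is $\mathcal{H}_\infty$.

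For completeness I would argue that $(Lip^1_\star(G,\Delta^+),\overline{\D},\star)$ inherits completeness from $(\Pi(G),\D,\star)$, which is complete by Theorem \ref{thmc}$(2)$. The crucial point is that $\Pi(G)$ is both open and closed in the $\overline{\D}$-topology: any element $f\notin\Pi(G)$ is isolated, since $\overline{\D}(f,g)=\mathcal{H}_\infty$ for every $g\neq f$, so no nontrivial sequence can approach it. Consequently a Cauchy sequence $(f_k)$ for $\overline{\D}$ is eventually constant if it ever leaves $\Pi(G)$, and otherwise lies entirely in $\Pi(G)$ where $\overline{\D}$ restricts to $\D$; in both situations it converges. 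The isometric embedding claim in $(1)$ follows from Theorem \ref{thmc}$(1)$, noting that $\delta_a,\delta_b\in\mathcal{G}(G)\subset\Pi(G)$, so $\overline{\D}(\delta_a,\delta_b)=\D(\delta_a,\delta_b)=D(a,b)$.

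Finally, for $(2)$, when $G$ is a probabilistic invariant metric group and $\star$ is sup-continuous, the fact that $\delta:(G,\cdot)\longrightarrow(Lip^1_\star(G,\Delta^+),\odot)$ is a group homomorphism is exactly the content of Proposition \ref{prop10}, which gives $\delta_a\odot\delta_b=\delta_{ab}$ for all $a,b\in G$, together with the injectivity already recorded in Proposition \ref{prop9}; I would simply invoke these results. Thus the proof reduces almost entirely to the case-by-case verification of the metric axioms for $\overline{\D}$, with the triangle inequality in the mixed cases being the only place requiring genuine care.
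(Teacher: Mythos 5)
Your proof follows the paper's own route: a case-by-case verification of the axioms for $\overline{\D}$ (your uniform argument that $\mathcal{H}_\infty$ is absorbing for $\star$ and minimal in $(\Delta^+,\leq)$ is a slightly tidier way of dispatching the mixed cases than the paper's enumeration of case combinations), completeness deduced from that of $(\Pi(G),\D,\star)$ plus the isolation of the points of $Lip^1_\star(G,\Delta^+)\setminus\Pi(G)$, and appeals to Theorem \ref{thmc} for the isometric embedding and to Propositions \ref{prop9} and \ref{prop10} for part $(2)$. All of that is sound and matches the paper.

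There is, however, one step that is false as stated: in the completeness argument you claim that a Cauchy sequence "is eventually constant if it ever leaves $\Pi(G)$, and otherwise lies entirely in $\Pi(G)$". The Cauchy condition $\lim_{n,p}\overline{\D}(f_n,f_p)=\mathcal{H}_0$ constrains only the tail of the sequence, so a sequence can leave $\Pi(G)$ at finitely many indices and still be Cauchy without being eventually constant: take $f_1\in Lip^1_\star(G,\Delta^+)\setminus\Pi(G)$ and $f_k=\delta_{a_k}$ for $k\geq 2$, where $(a_k)$ is a non-stationary convergent sequence in $G$. This sequence falls under your first case but violates its conclusion (it converges, but is not eventually constant). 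The correct dichotomy, which your own isolation observation proves, is: either infinitely many terms lie outside $\Pi(G)$, in which case Cauchyness forces the tail to be constant, equal to a single isolated point; or all but finitely many terms lie in $\Pi(G)$, in which case the tail is $\D$-Cauchy and converges in $(\Pi(G),\D,\star)$ by Theorem \ref{thmc}$(2)$, and convergence of the tail suffices. (The paper organizes this slightly differently: its second case extracts a subsequence lying in $\Pi(G)$, obtains its limit $f\in\Pi(G)$, and then uses the triangle inequality $\overline{\D}(f_k,f_{\varphi(k)})\star\overline{\D}(f_{\varphi(k)},f)\leq\overline{\D}(f_k,f)$ to transfer convergence to the whole sequence.) With this repair your argument is complete and coincides with the paper's proof.
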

\begin{proof} It suffices to prove that $(Lip^1_\star(G,\Delta^+),\overline{\D},\star)$ is a probabilistic complete metric space. The fact that $\delta: (G,D,\star) \longrightarrow (Lip^1_\star(G,\Delta^+), \overline{\D},\star)$ is an isometry (resp. an isometric homomorphism) is given by Theorem \ref{thmc} (resp. by Theorem \ref{corc}).

\vskip5mm

{\it Step 1:} we show that $\overline{\D}$ is a probabilistic metric on $Lip^1_\star(G,\Delta^+)$. 

$(i)$ Let $f, g \in Lip^1_\star(G,\Delta^+)$. Then, $\overline{\D}(f,g)= \mathcal{H}_0$ if and only if $f=g$ since $\D$ is a probabilistic metric on $\Pi(G)$. 

$(ii)$ it is clear that $\overline{\D}(f,g)=\overline{\D}(g,f)$ for all $f, g \in Lip^1_\star(G,\Delta^+)$.

$(iii)$ let $f, g, h \in Lip^1_\star(G,\Delta^+)$ and let us prove that $\overline{\D}(f,g)\star \overline{\D}(g,h)\leq \overline{\D}(f,h)$. This is clear in the case where $(f,g), (g,h)\in \Pi(G)\times \Pi(G)$ since $\D$ is a probabilistic metric. This is also clear in the case where $(f,g), (g,h)\in [\Pi(G)\times \Pi(G)]^C$ since $\mathcal{H}_{\infty}\star \mathcal{H}_{\infty}\leq \mathcal{H}_0\star\mathcal{H}_{\infty}=\mathcal{H}_{\infty}$. Suppose now that $(f,g) \in \Pi(G)\times \Pi(G)$ and $(g,h) \in [\Pi(G)\times \Pi(G)]^C$. In this case, also $(f,h) \in [\Pi(G)\times \Pi(G)]^C$, thus
\begin{eqnarray*}
\overline{\D}(f,g)\star \overline{\D}(g,h)&=& \D(f,g)\star \mathcal{H}_{\infty}\\
                    &\leq& \mathcal{H}_0\star \mathcal{H}_{\infty}\\              
                    &=& \mathcal{H}_{\infty}=\overline{\D}(f,h).\\
\end{eqnarray*}

{\it Step 2:} We show that $(Lip^1_\star(G,\Delta^+),\overline{\D},\star)$ is complete. Indeed, let $(f_k)\subset Lip^1_\star(G,\Delta^+)$ be a Cauchy sequence. 

{\it Case 1:} Threre exists $N\in \N$ such that for all $k\geq N$, $f_k \in Lip^1_\star(G,\Delta^+)\setminus \Pi(G)$. In this case, the sequence $(f_k)$ is necessarily a stationary sequence and so converges. 

{\it Case 2:} There exists a subsequence $(f_{\varphi(k)}) \subset \Pi(G)$. In this case, since $(\Pi(G),\D,\star)$ is complete, there exists $f\in \Pi(G)$ such that $\D(f_{\varphi(k)}, f)\,{\xrightarrow {\textnormal{w}}}\, \mathcal{H}_0$. So, using the following inequality 
$$\overline{\D}(f_k, f_{\varphi(k)})\star \D(f_{\varphi(k)},f)=\overline{\D}(f_k, f_{\varphi(k)})\star \overline{\D}(f_{\varphi(k)},f)\leq \overline{\D}(f_k,f),$$
and taking the limit when $k \longrightarrow +\infty$, we get that $\overline{\D}(f_k, f)\,{\xrightarrow {\textnormal{w}}}\, \mathcal{H}_0$.
\end{proof}
\subsection{The Group of Units of the Monoid $(Lip^1_\star(G,\Delta^+),\odot)$.}
We are going to caracterize the set of all invertible elements of the monoid $(Lip^1_\star(G,\Delta^+),\odot)$. The main result of this section Theorem \ref{theorem2} will be used to obtain a probabilistic Banach-Stone type theorem.  
\vskip5mm
Let $\mathcal{U}(\Delta^+)$ (resp. $\mathcal{U}(Lip^1_\star(G,\Delta^+))$) denotes the group of all invertible element of $(\Delta^+, \star)$ (resp. of $(Lip^1_\star(G,\Delta^+),\odot)$). Recall that if $f\in Lip^1_\star(G,\Delta^+)$ and $F\in \Delta^+$, then $\langle f, F\rangle (x):=f(x)\star F$ for all $x\in G$. Recall also that $\mathcal{G}(G):=\lbrace \delta_a/ a\in G \rbrace$. By $\langle \Pi(G), \mathcal{U}(\Delta^+) \rangle$, we denote the following subgroup of $(Lip^1_\star(G,\Delta^+),\odot)$
$$\langle \Pi(G), \mathcal{U}(\Delta^+) \rangle:= \lbrace \langle f, U \rangle / f\in \Pi(G) \textnormal{ and } U\in \mathcal{U}(\Delta^+)\rbrace.$$
Note that if $\mathcal{U}(\Delta^+)$ is trivial, that is $\mathcal{U}(\Delta^+)=\lbrace \mathcal{H}_0\rbrace$, then $\langle \Pi(G), \mathcal{U}(\Delta^+) \rangle= \Pi(G)$. The symbol "$\cong$" means "isometrically isomorphic".

\begin{Rem} Let $\star$ be a triangle function on $\Delta^+$ such that $\mathcal{U}(\Delta^+)=\lbrace \mathcal{H}_0\rbrace$ (see Lemma \ref{lemma1}). We equip $\Delta^+$ with the following probabilistic discret metric:
\[ D(K,L)=
\left\{
\begin{array}{rl}
\mathcal{H}_0& \textnormal{ if } K=L \\
\mathcal{H}_{\infty}& \textnormal{ otherwise } 
\end{array}
\right.
\]
Then, $(\Delta^+, \star,D)\cong(Lip^1_\star(\lbrace \mathcal{H}_0 \rbrace,\Delta^+), \odot, \overline{\D})$. 
\end{Rem}
\begin{lemma} \label{leminv} Let $(G,D,\star)$ be a probabilistic metric space (not necessarily complete). Let $(z_n)\subset G$ be a Cauchy sequence and $f : G\longrightarrow \Delta^+$ be a probabilistic $1$-Lipschitz map. Then, there exists $F\in \Delta^+$ such that $f(z_n) \,{\xrightarrow {\textnormal{w}}}\, F$, when $n\longrightarrow +\infty$.
\end{lemma}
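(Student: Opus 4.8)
The plan is to exploit the compactness of $(\Delta^+,\textnormal{w})$ together with the $1$-Lipschitz inequality, in exactly the spirit of the uniqueness-of-cluster-point argument already used in the proof of Lemma \ref{Lelemme}. Since $(\Delta^+,\textnormal{w})$ is a compact metrizable space (Sibley's result \cite{DS}), the sequence $(f(z_n))$ automatically has at least one weak cluster point; hence, to prove that it converges, it suffices to show that any two weakly convergent subsequences have the same limit.

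So I would fix two convergent subsequences $f(z_{\varphi(n)})\,{\xrightarrow {\textnormal{w}}}\, F$ and $f(z_{\psi(n)})\,{\xrightarrow {\textnormal{w}}}\, F'$ and aim to prove $F=F'$. The $1$-Lipschitz property of $f$, applied to the pair $(z_{\varphi(n)},z_{\psi(n)})$, gives
$$D(z_{\varphi(n)},z_{\psi(n)})\star f(z_{\psi(n)})\leq f(z_{\varphi(n)}),$$
and symmetrically $D(z_{\psi(n)},z_{\varphi(n)})\star f(z_{\varphi(n)})\leq f(z_{\psi(n)})$. The key observation is that, because $(z_n)$ is a Cauchy sequence and both $\varphi(n)$ and $\psi(n)$ tend to $+\infty$, we have $D(z_{\varphi(n)},z_{\psi(n)})\,{\xrightarrow {\textnormal{w}}}\,\mathcal{H}_0$. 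Passing to the limit in the two inequalities via Proposition \ref{util} (and using that $\mathcal{H}_0$ is the $\star$-identity) then yields $\mathcal{H}_0\star F'=F'\leq F$ and $\mathcal{H}_0\star F=F\leq F'$. Hence $F=F'$, and the whole sequence $(f(z_n))$ converges weakly to this common value $F\in\Delta^+$.

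The argument uses the same mechanism as Lemma \ref{Lelemme}, so I expect no serious difficulty; the only genuinely delicate point is the matching of two subsequences carried by \emph{different} index maps $\varphi$ and $\psi$, which forces me to check $D(z_{\varphi(n)},z_{\psi(n)})\,{\xrightarrow {\textnormal{w}}}\,\mathcal{H}_0$ rather than a single-index statement. This is exactly guaranteed by the Cauchy condition $\lim_{n,p\to +\infty}D(z_n,z_p)=\mathcal{H}_0$, read along the two index streams simultaneously. I would also flag that invoking Proposition \ref{util} requires $\star$ to be continuous, which is the standing hypothesis of this section; note that no completeness of $G$ is needed, since the compactness of $\Delta^+$ is what supplies the limiting distribution $F$.
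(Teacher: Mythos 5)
Your proof is correct and takes essentially the same route as the paper: compactness and metrizability of $(\Delta^+,\textnormal{w})$ reduce the claim to uniqueness of weak cluster points, and that uniqueness is obtained by applying the probabilistic $1$-Lipschitz inequality along the two subsequences, noting $D(z_{\varphi(n)},z_{\psi(n)})\,{\xrightarrow{\textnormal{w}}}\,\mathcal{H}_0$ from the Cauchy condition, and passing to the limit with Proposition \ref{util}. Your closing remarks (the two-index reading of the Cauchy condition, and the standing continuity hypothesis on $\star$ needed for Proposition \ref{util}) merely make explicit what the paper's proof leaves implicit.
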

\begin{proof}
Since $(\Delta^+,\textnormal{w})$ is a compact metrizable space, it suffices to show that every (weak) convergent subsequence of $(f(z_n))$ converges to the same limit. Indeed, let $(f(z_{\varphi(n)})$ and $(f(z_{\psi(n)})$ be two convergent subsequence and let $F$ and $K$ the respective (weak) limits. Since $f$ is a probabilistic $1$-Lipschitz map, then 
$$D(z_{\varphi(n)},z_{\psi(n)})\star f(z_{\varphi(n)})\leq f(z_{\psi(n)}).$$
Using Proposition \ref{util}, and the fact that $D(z_{\varphi(n)},z_{\psi(n)})\,{\xrightarrow {\textnormal{w}}}\,\mathcal{H}_0$ we get that $F\leq L$. In a similar way, we have that $L\leq F$.
\end{proof}
\begin{theorem} \label{theorem2} Let $\star$ be a continuous and sup-continuous triangle function. Let $(G, \cdot, D, \star)$ be an invariant probabilistic metric group (not necessarily complete). Then, we have,
$$\mathcal{U}(Lip^1_\star(G,\Delta^+))=\langle \Pi(G), \mathcal{U}(\Delta^+) \rangle.$$ Moreover, 

$(1)$ for all $f\in \Pi(G)$ and $U\in (\mathcal{U}(\Delta^+),\star)$, we have $\langle f, U \rangle^{-1}= \langle f^{-1}, U^{-1} \rangle$.

$(2)$ if $\mathcal{U}(\Delta^+)=\lbrace \mathcal{H}_0 \rbrace$, then 
\begin{eqnarray*}
(\mathcal{U}(Lip^1_\star(G,\Delta^+)),\odot,\D,\star)&=&(\Pi(G),\odot,\D,\star).
\end{eqnarray*}

$(3)$ if $\mathcal{U}(\Delta^+)=\lbrace \mathcal{H}_0 \rbrace$ and $G$ is complete, then 
\begin{eqnarray*}
(\mathcal{U}(Lip^1_\star(G,\Delta^+)),\odot,\D,\star)&=&(\mathcal{G}(G),\odot,\D,\star)\\
                                                            &\cong& (G,\cdot, D,\star).
\end{eqnarray*}
\end{theorem}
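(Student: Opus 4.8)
The plan is to reduce everything to one computational identity and one topological claim. The identity is that for maps $f,g:G\longrightarrow\Delta^+$ and $F,K\in\Delta^+$ the sup-continuity of $\star$ yields
$$\langle f,F\rangle\odot\langle g,K\rangle=\langle f\odot g,\,F\star K\rangle,$$
since $(f(y)\star F)\star(g(y^{-1}x)\star K)=\big(f(y)\star g(y^{-1}x)\big)\star(F\star K)$ and the fixed factor $F\star K$ can be pulled out of $\sup_{y}$ by sup-continuity. With this, the inclusion $\langle\Pi(G),\mathcal{U}(\Delta^+)\rangle\subseteq\mathcal{U}(Lip^1_\star(G,\Delta^+))$ together with assertion $(1)$ is immediate: by Theorem \ref{corc} the set $\Pi(G)$ is a group under $\odot$, so $f\in\Pi(G)$ has an inverse $f^{-1}\in\Pi(G)$, and for $U\in\mathcal{U}(\Delta^+)$ the identity gives $\langle f,U\rangle\odot\langle f^{-1},U^{-1}\rangle=\langle f\odot f^{-1},\,U\star U^{-1}\rangle=\langle\delta_e,\mathcal{H}_0\rangle=\delta_e$, and symmetrically on the other side; hence $\langle f,U\rangle^{-1}=\langle f^{-1},U^{-1}\rangle$.

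For the reverse inclusion, let $f\in\mathcal{U}(Lip^1_\star(G,\Delta^+))$ have inverse $g$, and set $U:=\sup_{x\in G}f(x)$ and $V:=\sup_{x\in G}g(x)$. Using the change of variable $w=y^{-1}z$ in $G$ and sup-continuity to pull both suprema through $\star$, one gets $\sup_{z}(f\odot g)(z)=\sup_{y,w}f(y)\star g(w)=U\star V$; since $f\odot g=\delta_e$ and $\sup_{z}\delta_e(z)=\sup_z D(z,e)=\mathcal{H}_0$, this forces $U\star V=\mathcal{H}_0$, so $U\in\mathcal{U}(\Delta^+)$ with $U^{-1}=V$. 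Putting $h:=\langle f,U^{-1}\rangle$ and $k:=\langle g,V^{-1}\rangle$, the identity above gives $h\odot k=k\odot h=\delta_e$, and sup-continuity gives $\sup_x h(x)=U\star U^{-1}=\mathcal{H}_0=\sup_x k(x)$. Thus the whole statement reduces to the core claim: \emph{any} $h\in\mathcal{U}(Lip^1_\star(G,\Delta^+))$ with inverse $k$ satisfying $\sup_x h(x)=\sup_x k(x)=\mathcal{H}_0$ already lies in $\Pi(G)$; granting this, $f=\langle h,U\rangle\in\langle\Pi(G),\mathcal{U}(\Delta^+)\rangle$.

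To prove the core claim I would first manufacture a Cauchy sequence. Evaluating $k\odot h=\delta_e$ at $e$ gives $\sup_y k(y)\star h(y^{-1})=\mathcal{H}_0$, so there is $(c_n)$ with $k(c_n)\star h(c_n^{-1})\,{\xrightarrow {\textnormal{w}}}\,\mathcal{H}_0$; since $A\star B\le A$ and $A\star B\le B$ while $\mathcal{H}_0$ is maximal, a squeeze forces $k(c_n)\,{\xrightarrow {\textnormal{w}}}\,\mathcal{H}_0$ and $h(c_n^{-1})\,{\xrightarrow {\textnormal{w}}}\,\mathcal{H}_0$. Set $a_n:=c_n^{-1}$, so $h(a_n)\,{\xrightarrow {\textnormal{w}}}\,\mathcal{H}_0$ and $k(a_n^{-1})\,{\xrightarrow {\textnormal{w}}}\,\mathcal{H}_0$. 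The decisive point is the lower bound coming from $h\odot k=\delta_e$ and the invariance of $D$:
$$D(a_n,a_m)=\delta_e(a_na_m^{-1})=(h\odot k)(a_na_m^{-1})\ge h(a_n)\star k(a_m^{-1}),$$
whose right-hand side tends to $\mathcal{H}_0\star\mathcal{H}_0=\mathcal{H}_0$ by continuity of $\star$; squeezing against $\mathcal{H}_0$ shows $D(a_n,a_m)\,{\xrightarrow {\textnormal{w}}}\,\mathcal{H}_0$, i.e. $(a_n)$ is Cauchy. I expect this to be the main obstacle: the $1$-Lipschitz property of $h$ alone only gives the useless upper bound $D(a_n,a_m)\star h(a_n)\le h(a_m)$, so one genuinely needs the inverse $k$ to locate a sequence along which \emph{both} $h$ and $k$ vanish, and then to convert $h\odot k=\delta_e$ into a lower bound on $D(a_n,a_m)$.

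Finally I would identify the limit. By Lemma \ref{Lelemme} the Cauchy sequence $(a_n)$ yields $\tilde h\in\Pi(G)$ with $D(a_n,x)\,{\xrightarrow {\textnormal{w}}}\,\tilde h(x)$ for all $x$; passing to the limit in $D(a_n,x)\star h(a_n)\le h(x)$ via Proposition \ref{util} gives $\tilde h\le h$. Since $\Pi(G)$ is a group, $\tilde h^{-1}\in\Pi(G)$ is the limit of $\delta_{a_n^{-1}}$ (by the construction in Theorem \ref{corc}), and the same passage to the limit in $D(a_n^{-1},x)\star k(a_n^{-1})\le k(x)$ gives $\tilde h^{-1}\le k$. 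Conversely, multiplying $\tilde h\le h$ on the left by $\tilde h^{-1}$ and on the right by $k=h^{-1}$ and using monotonicity of $\odot$ gives $k=\delta_e\odot k\le\tilde h^{-1}\odot h\odot k=\tilde h^{-1}$, whence $\tilde h^{-1}=k=h^{-1}$; uniqueness of inverses forces $h=\tilde h\in\Pi(G)$, establishing the core claim and the equality $\mathcal{U}(Lip^1_\star(G,\Delta^+))=\langle\Pi(G),\mathcal{U}(\Delta^+)\rangle$. Then $(2)$ follows because $\mathcal{U}(\Delta^+)=\{\mathcal{H}_0\}$ makes $\langle\Pi(G),\mathcal{U}(\Delta^+)\rangle=\Pi(G)$ and $\overline{\D}$ restricts to $\D$ on $\Pi(G)$, while $(3)$ adds that completeness of $G$ gives $\Pi(G)=\mathcal{G}(G)$, which is isometrically isomorphic to $G$ through $\delta$ by Theorem \ref{thmc}$(1)$ and Theorem \ref{corc}.
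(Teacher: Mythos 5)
Your proof is correct, and while it shares the paper's skeleton, its execution genuinely differs in two places. The common skeleton: the easy inclusion and part $(1)$ follow from the identity $\langle f,F\rangle\odot\langle g,K\rangle=\langle f\odot g,F\star K\rangle$ plus the group structure of $(\Pi(G),\odot)$ from Theorem \ref{corc}; the hard inclusion follows by evaluating $f\odot g=\delta_e$ at $e$, extracting a sequence along which the relevant product tends to $\mathcal{H}_0$, proving that sequence is Cauchy from the lower bound the definition of $\odot$ gives for $\delta_e$, and invoking Lemma \ref{Lelemme}. First difference: to get Cauchyness the paper keeps the product intact and symmetrizes, via $(f(z_n)\star g(z_n^{-1}))\star(f(z_p)\star g(z_p^{-1}))\leq D(z_n,z_p)$, whereas you first split $k(c_n)\star h(c_n^{-1})\to\mathcal{H}_0$ into $k(c_n)\to\mathcal{H}_0$ and $h(c_n^{-1})\to\mathcal{H}_0$ by the squeeze $A\star B\leq A\leq\mathcal{H}_0$ and then use the single cross bound $h(a_n)\star k(a_m^{-1})\leq(h\odot k)(a_na_m^{-1})=D(a_n,a_m)$; both are sound. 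Second, and more substantial: the paper's identification step invokes Lemma \ref{leminv} to produce limits $L_f=\lim f(z_n)$ and $K_g=\lim g(z_n^{-1})$ with $L_f\star K_g=\mathcal{H}_0$, and then proves the two-sided estimate $f=\langle l_f,L_f\rangle$; you avoid Lemma \ref{leminv} entirely, proving $\tilde h\leq h$ and $\tilde h^{-1}\leq k$ from the $1$-Lipschitz property and finishing algebraically with monotonicity of $\odot$ (one line from axiom $(v)$), Propositions \ref{prop12} and \ref{prop13}, and uniqueness of inverses in a monoid, so that $h=\tilde h\in\Pi(G)$ exactly. Your finish is shorter and cleaner, at the cost of leaning on the group structure of $\Pi(G)$; the paper's is more hands-on but needs the extra lemma. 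One remark worth recording: your preliminary normalization by $U=\sup_x f(x)$, $V=\sup_x g(x)$ is never actually used in your core claim, whose proof applies verbatim to an arbitrary unit, so your argument in fact shows $\mathcal{U}(Lip^1_\star(G,\Delta^+))=\Pi(G)$ with no hypothesis on $\mathcal{U}(\Delta^+)$. This is consistent rather than contradictory: since $\mathcal{H}_0$ is simultaneously the identity and the maximum of $(\Delta^+,\leq)$, any relation $U\star V=\mathcal{H}_0$ forces $\mathcal{H}_0=U\star V\leq U\star\mathcal{H}_0=U\leq\mathcal{H}_0$, so $\mathcal{U}(\Delta^+)=\lbrace\mathcal{H}_0\rbrace$ for every triangle function satisfying the paper's axioms; the same squeeze applied inside the paper's proof shows $L_f=\mathcal{H}_0$ there as well, so both proofs secretly establish the stronger, unconditional statement.
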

\begin{proof}
We first prove that $\lbrace \langle f, U \rangle / f\in \Pi(G), U\in \mathcal{U}(\Delta^+)\rbrace \subset \mathcal{U}(Lip^1_\star(G,\Delta^+))$. Indeed, using the definition of the operation $\odot$ and the sup-continuity of $\star$ ,  it is easy to see that 
$\langle f, U \rangle \odot \langle g, V \rangle= \langle f\odot g, U\star V\rangle$ for all $f, g\in \Pi(G)$ and all $U, V\in \mathcal{U}(\Delta^+)$. Since $\Pi(G)$ is a group by Theorem \ref{corc}, $\langle f, U \rangle \odot \langle f^{-1}, U^{-1} \rangle= \langle \delta_e, \mathcal{H}_0\rangle=\delta_e$ for all $f\in \Pi(G)$ and all $U\in \mathcal{U}(\Delta^+)$. Hence, $\langle f, U \rangle^{-1}= \langle f^{-1}, U^{-1} \rangle$.
\vskip5mm

Now, we are going to prove the converse. Indeed, let $f \in \mathcal{U}(Lip^1_\star(G,\Delta^+))$. There exists $g \in \mathcal{U}(Lip^1_\star(G,\Delta^+))$ such that 
\begin{equation} \label{eq4}
f\odot g =\delta_e
\end{equation}
In particular, we have $f\odot g(e) =\delta_e(e)=\mathcal{H}_0$. Thus, we obtain

\[ 
\left\{
\begin{array}{rl}
(f\odot g)(e)(t) = 0, \textnormal{ for all } t\leq 0 \\
(f\odot g)(e)(t) = 1, \textnormal{ for all } t>0
\end{array}
\right.
\]
So, on one hand, we have that for all $z\in G$ and for all $t\leq 0$
\begin{eqnarray} \label{eq1}
(f(z)\star g(z^{-1}))(t)=0
\end{eqnarray}
On the other hand, we have that for all $t>0$
\begin{eqnarray} \label{eq2}
\sup_{z\in G}\lbrace (f(z)\star g(z^{-1}))(t)\rbrace =1
\end{eqnarray}
For all $n\in \N$, by taking $t=\frac{1}{n}$ in (\ref{eq2}), there exists $(z_n)\subset G$ such that for all $n\in \N^*$
\begin{eqnarray*} 
1-\frac{1}{n} < (f(z_n)\star g(z^{-1}_n))(\frac{1}{n}) \leq 1
\end{eqnarray*}
Since $f(z_n)\star g(z^{-1}_n)\in \Delta^+$ is increasing then, for all $t>0$ there exists $N_t\in \N$ such that for all $n\geq N_t$ we have
\begin{eqnarray} \label{eq5}
1-\frac{1}{n} < (f(z_n)\star g(z^{-1}_n))(t) \leq 1
\end{eqnarray}
From (\ref{eq1}) and (\ref{eq5}) we have that for all $t\in \R$ 
\begin{eqnarray} \label{eq6}
\lim_{n \rightarrow +\infty} (f(z_n)\star g(z^{-1}_n))(t)= \mathcal{H}_0(t)
\end{eqnarray} 
From the definition of $f\odot g$, we get that for all $x, y \in G$
\begin{eqnarray} \label{eq3} 
f(x) \star g(y^{-1}) \leq (f\odot g)(xy^{-1})
\end{eqnarray} 
Using (\ref{eq3}), we get that 
\begin{eqnarray*}  
(f(z_n) \star g(z^{-1}_p))\star (f(z_p) \star g(z^{-1}_n)) \leq (f\odot g)(z_n z^{-1}_p)\star (f\odot g)(z_p z^{-1}_n)
\end{eqnarray*}
By the commutativity of the operation $\star$, the invariance of $D$ and (\ref{eq4}), we obtain
\begin{eqnarray} \label{eq7}  
(f(z_n) \star g(z^{-1}_n))\star (f(z_p) \star g(z^{-1}_p)) &\leq& \delta_e(z_n z^{-1}_p)\star \delta_e(z_p z^{-1}_n)\nonumber\\
                                                           &=& D(z_n,z_p)\star D(z_n,z_p).
\end{eqnarray} 
Using the fact that $\mathcal{H}_0$ is the maximal element of $\Delta^+$, we get that for all $n, p \in \N^*$
\begin{eqnarray} \label{eq8} 
D(z_n,z_p)\star D(z_n,z_p) &\leq& D(z_n,z_p)\star \mathcal{H}_0 \nonumber\\
                           &=&D(z_n,z_p)\nonumber \\
                           &\leq& \mathcal{H}_0.                         
\end{eqnarray}
From (\ref{eq7}), (\ref{eq8}), we get
\begin{eqnarray} \label{eq8'} 
(f(z_n) \star g(z^{-1}_n))\star (f(z_p) \star g(z^{-1}_p)) \leq D(z_n,z_p) \leq \mathcal{H}_0.                         
\end{eqnarray}
Using the formulas (\ref{eq6}) and (\ref{eq8'}) and the continuity of $\star$, we obtain
\begin{eqnarray*}  
\lim_{n,p}D(z_n,z_p)=\mathcal{H}_0.
\end{eqnarray*}
Hence, the sequence $(z_n)\subset G$ is a Cauchy sequence and also $(z^{-1}_n)\subset G$ is a Cauchy sequence since $(G;D,\star)$ is a probabilistic invariant metric group. By Lemma \ref{Lelemme} there exists $l_f, l_g \in \Pi(G)$, $1$-Lipschitz maps such that $D(z_n,x) \,{\xrightarrow {\textnormal{w}}}\,l_f(x)$ and $D(z^{-1}_n,x) \,{\xrightarrow {\textnormal{w}}}\,l_g(x)$ for all $x\in G$, when $n\longrightarrow +\infty$. By Lemma \ref{leminv}, there exists $L_f, K_g\in \Delta^+$ such that $f(z_n)\,{\xrightarrow {\textnormal{w}}}\, L_f$ and $g(z^{-1}_n) \,{\xrightarrow {\textnormal{w}}}\,K_g$. Using the continuity of the law $\star$ in the formula (\ref{eq6}), we get that $L_f\star K_g=\mathcal{H}_0$. So, $L_f, K_g \in (\mathcal{U}(\Delta^+),\star)$ and $K_g=L_f^{-1}$. Using the definition of $f\odot g$ and the fact that $f\odot g=\delta_e$, we have for all $x\in G$
\begin{eqnarray*}
f(x)\star g (z^{-1}_n) \leq f\odot g(xz_{n}^{-1})= \delta_e(xz_{n}^{-1})=D(x,z_{n})
\end{eqnarray*}
Using Proposition \ref{util},
\begin{eqnarray*}
f(x)\star K_g \leq l_f(x)
\end{eqnarray*}
Thus,
\begin{eqnarray*}
f(x)\leq l_f(x)\star L_f.
\end{eqnarray*}
On the other hand, since $f$ is $1$-Lipschitz, then $D(x,z_{n})\star f(z_{n})\leq f(x)$ and so using Proposition \ref{util} we obtain that $l_f(x)\star L_f\leq f(x)$. Hence, $f(x)=l_f(x)\star L_f=:\langle l_f, L_f \rangle (x)$ for all $x\in G$. Thus, $f=\langle l_f, L_f \rangle$ with $l_f\in \Pi(G)$ and $L_f\in (\mathcal{U}(\Delta^+),\star)$. This finish the proof of the converse. 
\vskip5mm
The parts $(2)$ and $(3)$ are easy consequences.
\end{proof}
\subsection{Banach-Stone Type Theorem.}
We obtain the following probabilistic Banach-Stone type theorem. Note that a general framework where the group of invertible elements of $(\Delta^+,\star)$ is trivial (i.e. $\mathcal{U}(\Delta^+)=\lbrace \mathcal{H}_0 \rbrace$) is given in Lemma \ref{lemma1} below. The following theorem shows that the structure of the probabilistic invariant complete metric group $(G, \cdot, D, \star)$ such that the group of invertible elements of $(\Delta^+,\star)$ is trivial, is completely determined by the probabilistic metric monoid structure of $Lip^1_\star(G,\Delta^+)$. This is the case in particular for every invariant complete Menger groups (Corollary \ref{corollaryend}).
\begin{theorem} \label{Ba-Sto} Let $\star$ be a continuous and sup-continuous triangle function such that the group of invertible elements of $(\Delta^+,\star)$ is trivial. Let $(G, \cdot, D, \star)$ and  $(G',\cdot, D',\star)$ be two probabilistic invariant complete metric groups. Then, the following assertions are equivalent.

$(1)$ $(G, \cdot, D, \star)$ and $(G',\cdot, D',\star)$ are isometrically isomorphic as groups,

$(2)$ $(Lip^1_\star(G,\Delta^+),\overline{\D}, \odot)$ and $(Lip^1_\star(G',\Delta^+),\overline{\D},\odot)$ are isometrically isomorphic as monoids.

\end{theorem}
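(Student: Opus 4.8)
The plan is to prove the two implications separately, with the forward direction $(1)\Rightarrow(2)$ being a direct transport of structure along the given isomorphism and the reverse direction $(2)\Rightarrow(1)$ resting entirely on the characterization of the group of units obtained in Theorem \ref{theorem2}.

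For $(1)\Rightarrow(2)$, suppose $\mathcal{I}: G\longrightarrow G'$ is an isometric group isomorphism. I would define $\Phi: Lip^1_\star(G,\Delta^+)\longrightarrow Lip^1_\star(G',\Delta^+)$ by $\Phi(f):=f\circ \mathcal{I}^{-1}$. First I would check that $\Phi(f)$ is probabilistic $1$-Lipschitz on $G'$, which is immediate from $D'(\mathcal{I}(x),\mathcal{I}(y))=D(x,y)$ and the definition of the Lipschitz condition. Since $\mathcal{I}$ is a group isomorphism, a short computation with the definition of $\odot$ (reindexing via $y=\mathcal{I}^{-1}(y')$) yields $\Phi(f\odot g)=\Phi(f)\odot\Phi(g)$ and $\Phi(\delta_e)=\delta'_{e'}$, so $\Phi$ is a monoid isomorphism with inverse $g\mapsto g\circ \mathcal{I}$. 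To see that $\Phi$ is an isometry for $\overline{\D}$, I would note that $\mathcal{I}$ carries Cauchy sequences to Cauchy sequences, hence $\Phi$ maps $\Pi(G)$ bijectively onto $\Pi(G')$; on $\Pi(G)\times\Pi(G)$ the equality $\overline{\D'}(\Phi f,\Phi g)=\sup_{x'}f(\mathcal{I}^{-1}x')\star g(\mathcal{I}^{-1}x')=\D(f,g)$ follows by reindexing the supremum, while off $\Pi(G)\times\Pi(G)$ the value of $\overline{\D}$ is $\mathcal{H}_0$ or $\mathcal{H}_{\infty}$ according to whether the arguments coincide, a dichotomy preserved by the bijection $\Phi$.

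For the reverse implication $(2)\Rightarrow(1)$, let $\Phi$ be an isometric monoid isomorphism. The key observation is that any monoid isomorphism carries the group of units onto the group of units, so $\Phi$ restricts to a group isomorphism $\mathcal{U}(Lip^1_\star(G,\Delta^+))\longrightarrow \mathcal{U}(Lip^1_\star(G',\Delta^+))$. Since $\mathcal{U}(\Delta^+)$ is trivial and $G,G'$ are complete, Theorem \ref{theorem2}$(3)$ identifies these unit groups with $\mathcal{G}(G)\cong G$ and $\mathcal{G}(G')\cong G'$. Thus for each $a\in G$ the element $\Phi(\delta_a)$ lies in $\mathcal{G}(G')$, and by the injectivity of $\delta'$ (Proposition \ref{prop9}) there is a unique $a'\in G'$ with $\Phi(\delta_a)=\delta'_{a'}$; I would set $\mathcal{I}(a):=a'$. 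Bijectivity of $\mathcal{I}$ follows from that of $\Phi$ on the unit groups together with the bijectivity of $\delta,\delta'$ onto $\mathcal{G}(G),\mathcal{G}(G')$. The homomorphism property comes from $\delta_a\odot\delta_b=\delta_{ab}$ (Proposition \ref{prop10}): applying $\Phi$ gives $\delta'_{\mathcal{I}(a)}\odot\delta'_{\mathcal{I}(b)}=\delta'_{\mathcal{I}(ab)}$, whence $\mathcal{I}(ab)=\mathcal{I}(a)\mathcal{I}(b)$. Finally the isometry property is read off from $D'(\mathcal{I}(a),\mathcal{I}(b))=\D'(\delta'_{\mathcal{I}(a)},\delta'_{\mathcal{I}(b)})=\overline{\D'}(\Phi\delta_a,\Phi\delta_b)=\overline{\D}(\delta_a,\delta_b)=\D(\delta_a,\delta_b)=D(a,b)$, using Theorem \ref{thmc}$(1)$ at both ends and the inclusion $\mathcal{G}(G)\subset\Pi(G)$, where $\overline{\D}$ coincides with $\D$.

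The hard part is not in the present argument but in the input it consumes: essentially all the content is packaged into Theorem \ref{theorem2}, whose proof identifies the group of units of the monoid with the image $\delta(G)=\mathcal{G}(G)$. Once that characterization is available, the only delicate point is verifying that the purely algebraic restriction of $\Phi$ to units is automatically an isometry for $\D$ — which holds precisely because $\mathcal{G}(G)$ sits inside $\Pi(G)$, the region where $\overline{\D}=\D$, and because $\Phi$ is assumed $\overline{\D}$-isometric. I would therefore spend the most care on bookkeeping the three compatibilities (bijection, homomorphism, isometry) and, in the forward direction, on confirming that $\Phi$ respects the decomposition of $Lip^1_\star$ into $\Pi(G)$ and its complement.
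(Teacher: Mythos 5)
Your proposal is correct and follows essentially the same route as the paper: the forward direction transports structure via $\Phi(f)=f\circ\mathcal{I}^{-1}$, and the reverse direction restricts the monoid isomorphism to the groups of units, invokes Theorem \ref{theorem2} together with the triviality of $\mathcal{U}(\Delta^+)$ and completeness to identify these units with $\mathcal{G}(G)\cong G$ and $\mathcal{G}(G')\cong G'$, and then conjugates by $\delta$, exactly as in the paper's definition $\mathcal{I}:=\delta^{-1}\circ\Phi_{|\mathcal{G}(G)}\circ\delta$. Your write-up is in fact slightly more careful than the paper's at the points it labels "easy to see," notably in checking that $\Phi$ preserves the decomposition of $Lip^1_\star$ into $\Pi(G)$ and its complement so that the piecewise-defined $\overline{\D}$ is respected.
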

\begin{proof} $(1) \Longrightarrow (2)$. Suppose that $\mathcal{I} : G \longrightarrow G'$ is a group isomorphism such that, for all $x, y \in G$, $D'(\mathcal{I}(x),\mathcal{I}(y))=D(x,y)$. Then, it is easy to see that the map defined by $\Phi(f):= f\circ \mathcal{I}^{-1}$ for all $f\in Lip^1_\star(G,\Delta^+)$ maps the space $Lip^1_\star(G,\Delta^+)$ into the space $Lip^1_\star(G',\Delta^+)$ and satisfies $\Phi(f\odot g)=\Phi(f)\odot \Phi(g)$ for all $f,g \in Lip^1_\star(G,\Delta^+)$. So $\Phi$ is a monoid isomorphism. Also we can easly see that $\overline{\D}(\Phi(f), \Phi(g))=\overline{\D}(f,g)$ for all $f,g \in Lip^1_\star(G,\Delta^+)$, so $\Phi$ is an isometry.

$(2)\Longrightarrow(1)$. Suppose that $\Phi : Lip^1_\star(G,\Delta^+)\longrightarrow Lip^1_\star(G',\Delta^+)$ is an isomorphism of monoids such that $\overline{\D}(\Phi(f), \Phi(g))=\overline{\D}(f,g)$ for all $f,g \in Lip^1_\star(G,\Delta^+)$. Since an isomorphism of monoids send the group of invertibles elements on the group of invertible elements, then by using Theorem \ref{theorem2}, we obtain that the restriction $\Phi_{|\langle \mathcal{G}(G), \mathcal{U}(\Delta^+) \rangle}: \langle \mathcal{G}(G), \mathcal{U}(\Delta^+) \rangle\longrightarrow \langle \mathcal{G}(G'), \mathcal{U}(\Delta^+) \rangle$ of $\Phi$ is an isometric isomorphism. Since $\mathcal{U}(\Delta^+)=\lbrace \mathcal{H}_0 \rbrace$, then $\langle \mathcal{G}(G), \mathcal{U}(\Delta^+)=\mathcal{G}(G)$ and so $\Phi_{|\mathcal{G}(G)}: \mathcal{G}(G)\longrightarrow \mathcal{G}(G') $ is an isometric isomorphism. Using the fact that $\delta : G\longrightarrow \mathcal{G}(G)$ (resp. $\delta : G'\longrightarrow \mathcal{G}(G')$) is an isometric isomorphism, we obtain that $\mathcal{I}:=\delta^{-1}\circ \Phi_{|\mathcal{G}(G)}\circ \delta : G\longrightarrow G'$ is an isomorphism and satisfies $D'(\mathcal{I}(a),\mathcal{I}(b))=D(a,b)$ for all $a, b \in G$.

\end{proof}
\vskip5mm
In the following lemma, we prove that the set of all invertible element of $(\Delta^+, \star)$ is trivial (i.e. $\mathcal{U}(\Delta^+)=\lbrace \mathcal{H}_0 \rbrace$) in the case where the law $\star=\star_T$ is defined for all $L, K\in \Delta^+$ and for all $t\in \R$ by:
\begin{eqnarray} \label{eq11}
(L\star_T K)(t):=\sup_{s+u=t} T(L(s),K(u))
\end{eqnarray}
where, $T: [0,1]\times [0,1] \longrightarrow [0,1]$ is a left-continuous $t$-norm. 
\begin{lemma} \label{lemma1} The distribution $\mathcal{H}_0$ is the only one invertible element of the monoid $(\Delta^+,\star_T)$.
\end{lemma}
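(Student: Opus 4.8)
The plan is to reduce everything to the statement: if $L\star_T K=\mathcal{H}_0$ for some $L,K\in\Delta^+$, then $L=\mathcal{H}_0$. Once this is shown, substituting back gives $K=\mathcal{H}_0\star_T K=\mathcal{H}_0$ (because $\mathcal{H}_0$ is the identity), so the only element possessing a $\star_T$-inverse is $\mathcal{H}_0$ itself. The whole argument rests on one elementary fact, which I would record first: every $t$-norm is dominated by the minimum. Indeed, from the boundary condition $(T4)$ and the monotonicity $(T3)$ one gets $T(x,y)\leq T(x,1)=x$, and by commutativity $(T1)$ likewise $T(x,y)\leq y$, so $T(x,y)\leq\min(x,y)$ for all $x,y\in[0,1]$.

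Next I would fix $t>0$ and analyse $(L\star_T K)(t)=\sup_{s+u=t}T(L(s),K(u))$. The point is that only the indices with $u>0$, i.e. $s<t$, can contribute anything positive: since $K\in\Delta^+$ is nondecreasing with $K(0)=0$, we have $K(u)=0$ whenever $u\leq 0$, and then $T(L(s),K(u))\leq\min(L(s),K(u))\leq K(u)=0$. Discarding those terms and using $T(L(s),K(u))\leq L(s)$ for the remaining ones yields $(L\star_T K)(t)=\sup_{s<t}T(L(s),K(t-s))\leq\sup_{s<t}L(s)$.

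It then remains to identify $\sup_{s<t}L(s)$, and this is the one step I expect to require care. Here I would invoke that $L$ is nondecreasing and left-continuous, so that $\sup_{s<t}L(s)=\lim_{s\to t^-}L(s)=L(t)$. Combining this with the previous bound gives $(L\star_T K)(t)\leq L(t)$ for every $t>0$. But $L\star_T K=\mathcal{H}_0$ forces $(L\star_T K)(t)=1$ for all $t>0$, whence $L(t)=1$ for all $t>0$; together with $L(t)=0$ for $t\leq 0$, valid for any element of $\Delta^+$, this is precisely $L=\mathcal{H}_0$.

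I expect the only genuine subtlety to be the handling of the supremum defining $\star_T$: one must simultaneously throw away the half-line $u\leq 0$ (using $K(0)=0$ together with $T(x,y)\leq\min(x,y)$) and then convert $\sup_{s<t}L(s)$ into $L(t)$ by left-continuity of $L$. Everything else, namely the domination $T\leq\min$ and the boundary behaviour of distributions in $\Delta^+$, is routine; note in particular that left-continuity of the $t$-norm $T$ is not needed for this lemma.
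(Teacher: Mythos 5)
Your proof is correct, and it takes a genuinely cleaner route than the paper's. The paper argues sequentially: from $\sup_{s+u=\frac{1}{n}}T(L(s),K(u))=1$ it extracts near-maximizers $s_n$ with $1-\frac{1}{n}<T(L(s_n),K(\frac{1}{n}-s_n))$, uses $T(x,y)\leq y$ and $T(x,0)=T(0,x)=0$ to pin down $0<s_n<\frac{1}{n}$, and then lets left-continuity of $K$ act along $t-s_n\to t^{-}$ to conclude $K=\mathcal{H}_0$ first, and only afterwards $L=\mathcal{H}_0$. You avoid any sequence extraction: the bound $T\leq\min$ yields the pointwise inequality $(L\star_T K)(t)\leq\sup_{s<t}L(s)=L(t)$, so $L\star_T K=\mathcal{H}_0$ forces $L=\mathcal{H}_0$ outright, and $K$ then follows from the identity property of $\mathcal{H}_0$. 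The raw ingredients are the same in both proofs (monotonicity and boundary behaviour of $T$, vanishing of elements of $\Delta^+$ on $(-\infty,0]$, left-continuity of a distribution function -- note that the paper, like you, never uses left-continuity of $T$ itself), but your decomposition isolates the structural fact that actually drives the lemma, namely $L\star_T K\leq L$ pointwise. This buys generality: that inequality holds for \emph{every} triangle function directly from the axioms of Definition \ref{axiom1}, since $K\leq\mathcal{H}_0$ together with axioms $(ii)$, $(iv)$, $(v)$ gives $L\star K\leq L\star\mathcal{H}_0=L$, and $\mathcal{H}_0$ is the maximal element of $(\Delta^+,\leq)$; phrased that way, your argument shows $\mathcal{U}(\Delta^+)=\lbrace\mathcal{H}_0\rbrace$ for any triangle function, which subsumes both this lemma and Proposition \ref{lemma2} on $\star_{T^*}$ in one stroke, whereas the paper's computation is tied to the explicit formula for $\star_T$ and must be redone for each law. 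What the paper's method buys, by contrast, is only stylistic consistency: the same extraction-of-near-maximizers technique reappears in the proofs of Theorem \ref{thmc} and Theorem \ref{theorem2}.
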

\begin{proof}
Let $L, K \in \Delta^{+}$ be such that $L\star_{T} K=\mathcal{H}_0$. Then, for all $t> 0$, $$\sup_{s+u=t} T(L(s),K(u))=1.$$ In particular, for each $n\in \N^*$,
\begin{eqnarray} \label{eq10}
\sup_{s+u=\frac{1}{n}} T(L(s),K(u))=1
\end{eqnarray}
Thus, from (\ref{eq10}), there exists a sequence $(s_n)$ of real numbers such that 
\begin{eqnarray*}
1- \frac{1}{n}< T(L(s_n),K(\frac{1}{n} - s_n)).
\end{eqnarray*}
Using the monotonocity of $T$ and the fact that $T(x,y)\leq T(1,y)=y\leq 1$ for all $x, y \in [0,1]$, we obtain for all $t> 0$, there exists $N_t\in \N$ such that for all $n\geq N_t$ we have 
\begin{eqnarray*}
1- \frac{1}{n}< T(L(s_n),K(\frac{1}{n} - s_n))\leq T(L(s_n),K(t - s_n))\leq K(t - s_n)\leq 1,
\end{eqnarray*}
Since $T(0,x)=T(x,0)=0$ for all $x\in [0,1]$, we deduce form the above inequality that necessarily $0 < s_n < \frac{1}{n}$ for all $n\in \N^*$ (since, $L(s_n)$ and $K(\frac{1}{n} - s_n)$ must be different from $0$). Thus,  $s_n\searrow 0$ and for all $t> 0$, there exists $N_t\in \N$ such that for all $n\geq N_t$
\begin{eqnarray*}
1- \frac{1}{n}< K(t - s_n)\leq 1,
\end{eqnarray*}
Since $K$ is left-continuous, we get that $K(t)=1$ for all $t>0$. On the other hand, we know that $K(t)=0$ for all $t\leq 0$. It follows that $K=\mathcal{H}_0$ and so also $L=\mathcal{H}_0$. In consequence, we have $\mathcal{U}(\Delta^+)=\lbrace \mathcal{H}_0 \rbrace.$

\end{proof}
We obtain in the following corollary the result announced in the abstract. In other words, let $T$ be a left-continuous $t$-norm then, every probabilistic invariant complete Menger group $(G, \cdot, D, T)$ is completely determined by the probabilistic metric monoid structure of $(Lip^1_\star(G,\Delta^+),\D,\odot)$.
\begin{corollary} \label{corollaryend} Let $T$ be a left-continuous $t$-norm and $(G, \cdot, D, T)$, $(G',\cdot, D', T)$ be two probabilistic invariant complete Menger groups. Then, the following assertions are equivalent.

$(1)$ $(G, \cdot, D, T)$ and $(G',\cdot, D', T)$ are isometrically isomorphic as Menger groups

$(2)$ $(Lip^1_\star(G,\Delta^+), \odot, \D,\star)$ and $(Lip^1_\star(G',\Delta^+),\odot, \D,\star)$ are isometrically isomorphic as probabilistic metric monoids.
\end{corollary}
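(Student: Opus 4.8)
The plan is to derive this corollary directly from Theorem \ref{Ba-Sto}, since a probabilistic invariant complete Menger group is by definition nothing but a probabilistic invariant complete metric group $(G,\cdot,D,\star)$ in which the triangle function is the specific operation $\star=\star_T$ associated with the left-continuous $t$-norm $T$. Under this identification, ``isometrically isomorphic as Menger groups'' and ``isometrically isomorphic as probabilistic invariant metric groups'' mean the same thing, so statement $(1)$ of the corollary and statement $(1)$ of Theorem \ref{Ba-Sto} coincide, and likewise for the statements $(2)$. Thus the entire task reduces to checking that the hypotheses imposed on $\star$ in Theorem \ref{Ba-Sto} are automatically satisfied by $\star_T$.

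First I would verify that $\star_T$ is both continuous and sup-continuous: this is exactly the content of Example \ref{example1}, which asserts that for any left-continuous $t$-norm $T$ the operation $(F\star_T L)(t)=\sup_{s+u=t}T(F(s),L(u))$ satisfies the triangle-function axioms $(i)$--$(v)$ and is simultaneously continuous and sup-continuous. Second I would invoke Lemma \ref{lemma1}, which establishes that $\mathcal{H}_0$ is the only invertible element of $(\Delta^+,\star_T)$; that is, the group of units $\mathcal{U}(\Delta^+)$ of $(\Delta^+,\star_T)$ is trivial. These are precisely the two standing hypotheses of Theorem \ref{Ba-Sto} (``$\star$ continuous and sup-continuous'' and ``the group of invertible elements of $(\Delta^+,\star)$ is trivial''), so with $\star=\star_T$ that theorem applies verbatim and yields the desired equivalence $(1)\Longleftrightarrow(2)$.

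The only point requiring a word of care is notational rather than mathematical: in assertion $(2)$ of the corollary the metric on $Lip^1_\star(G,\Delta^+)$ is written $\D$, whereas $\D$ is defined only on $\Pi(G)\times\Pi(G)$, and the metric that actually turns $Lip^1_\star(G,\Delta^+)$ into a complete probabilistic metric monoid (Proposition \ref{proposition.3}) and appears in Theorem \ref{Ba-Sto} is its extension $\overline{\D}$. I would read the $\D$ in the corollary as this extension $\overline{\D}$, observing that $\overline{\D}$ restricts to $\D$ on $\Pi(G)$, so that no additional argument is needed. Consequently there is no genuine obstacle in this proof: once the identification of the structures is made and the two hypotheses on $\star_T$ are cited from Example \ref{example1} and Lemma \ref{lemma1}, the corollary is an immediate specialization of Theorem \ref{Ba-Sto}.
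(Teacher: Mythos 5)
Your proposal is correct and follows exactly the paper's own route: the paper proves this corollary as a direct consequence of Theorem \ref{Ba-Sto} and Lemma \ref{lemma1}, which is precisely your argument, with your citation of Example \ref{example1} for the continuity and sup-continuity of $\star_T$ and your remark on reading the $\D$ in assertion $(2)$ as $\overline{\D}$ being careful (and accurate) spellings-out of details the paper leaves implicit.
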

\begin{proof} The proof is a direct consequence of Theorem \ref{Ba-Sto} and Lemma \ref{lemma1}.
\end{proof}
\vskip5mm
Also, we have that $\mathcal{U}(\Delta^+)=\lbrace \mathcal{H}_0 \rbrace$ if $\star=\star_{T^*}$ is defined for all $L, K\in \Delta^+$ and for all $t\in \R$ by:
\begin{eqnarray*}
(L\star_{T^*} K)(t):=\inf_{s+u=t} T^*(L(s),K(u))
\end{eqnarray*}
where, $T: [0,1]\times [0,1] \longrightarrow [0,1]$ is a $t$-norm and $T^*(x,y):=1-T(1-x,1-y)$ for all $x, y \in [0,1]$ ($T^*$ is a dual function of $T$ called $t$-conorm).
\begin{proposition} \label{lemma2} The distribution $\mathcal{H}_0$ is the only one invertible element of the monoid $(\Delta^+,\star_{T^*})$.
\end{proposition}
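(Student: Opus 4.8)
The plan is to exploit the fact that $\star_{T^*}$ is built from an \emph{infimum}, which makes the argument substantially shorter than the supremum-based proof of Lemma \ref{lemma1}. Suppose $L, K \in \Delta^+$ satisfy $L \star_{T^*} K = \mathcal{H}_0$. Fix $t > 0$; then
$$(L \star_{T^*} K)(t) = \inf_{s+u=t} T^*(L(s), K(u)) = \mathcal{H}_0(t) = 1.$$
Since every term $T^*(L(s),K(u))$ lies in $[0,1]$ and the infimum over all decompositions equals $1$, \emph{each} individual term must already equal $1$. This is the crucial structural difference with the supremum case treated in Lemma \ref{lemma1}: there, $\sup = 1$ controls only a single near-optimal decomposition and forces a sequential/limiting argument, whereas here $\inf = 1$ controls \emph{all} decompositions at once, so I am free to plug in whichever one is convenient.

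The second step is to evaluate this equality at the particular decomposition $s = 0$, $u = t$. Because $L \in \Delta^+$ forces $L(0) = 0$, and because the $t$-conorm satisfies $T^*(0,y) = 1 - T(1, 1-y) = y$ (using commutativity and the boundary condition $T(1,a) = a$), the corresponding term becomes $T^*(L(0), K(t)) = T^*(0, K(t)) = K(t)$. Hence $K(t) = 1$ for every $t > 0$. Combined with $K(t) = 0$ for all $t \le 0$ (again from $K \in \Delta^+$), this yields $K = \mathcal{H}_0$. Running the symmetric choice $u = 0$, $s = t$ and using $T^*(x, 0) = 1 - T(1-x,1) = x$ gives $L = \mathcal{H}_0$ in exactly the same way.

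Finally, since $\mathcal{H}_0$ is the identity of $(\Delta^+, \star_{T^*})$ it is trivially invertible, so combining the two implications gives $\mathcal{U}(\Delta^+) = \lbrace \mathcal{H}_0 \rbrace$. I do not anticipate a genuine obstacle: the only point requiring a moment of care is the justification that $\inf = 1$ forces every term to equal $1$, which is immediate because all the values $T^*(L(s),K(u))$ are bounded above by $1$. Once that observation is in place, the two trivial decompositions $(s,u)=(0,t)$ and $(s,u)=(t,0)$ together with the identities $T^*(0,y)=y$ and $T^*(x,0)=x$ do all the work, and no appeal to compactness of $(\Delta^+,\textnormal{w})$ or to a Cauchy-type limiting argument is needed.
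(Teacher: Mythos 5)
Your proof is correct and is essentially the paper's own argument in undualized form: both proofs rest on the observation that the extremal value of the aggregation pins down \emph{every} decomposition $s+u=t$, and then evaluate at a decomposition with one argument equal to $0$, where $L(0)=K(0)=0$ together with the boundary condition $T(x,1)=x$ (equivalently $T^*(x,0)=x$) forces the other distribution to equal $1$. The only differences are cosmetic: the paper first rewrites $\inf_{s+u=t} T^*(L(s),K(u))=1$ as $\sup_{s+u=t} T(1-L(s),1-K(u))=0$ and runs a superfluous $t=\frac{1}{n}$ plus monotonicity step before concluding, whereas you work directly with the infimum at an arbitrary $t>0$ and treat $L$ and $K$ symmetrically.
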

\begin{proof}
Let $L, K \in \Delta^{+}$ be such that $L\star_{T^*} K=\mathcal{H}_0$. Then, for all $t\in \R$, $$\inf_{s+u=t} T^*(L(s),K(u))=\mathcal{H}_0(t).$$ 
In other words, for all $t\in \R$
$$\sup_{s+u=t} T(1-L(s),1-K(u))=1-\mathcal{H}_0(t).$$ 
In particular, for each $n\in \N^*$,
\begin{eqnarray} \label{eq101}
\sup_{s+u=\frac{1}{n}} T(1-L(s),1-K(u))=0
\end{eqnarray}
Thus, from (\ref{eq101}), for all $n\in \N^*$, 
\begin{eqnarray*}
T(1-L(\frac{1}{n}),1-K(0))=0.
\end{eqnarray*}
Since $K(0)=0$, using the fact that $T(x,1)=x$ for all $x \in [0,1]$, we obtain that $L(\frac{1}{n})=1$ for all $n\in \N^*$. Since $L$ is nondecreasing, we get that for all $t>0$, $L(t)=1$. So that $L=\mathcal{H}_0$ and so also $K=\mathcal{H}_0$.
\end{proof}
\begin{Rem} Note that $\star_{T^*}$ is not sup-continuous and therefore not used in this paper. 
\end{Rem}
%

\bibliographystyle{amsplain}

\providecommand{\bysame}{\leavevmode\hbox to3em{\hrulefill}\thinspace}
\providecommand{\MR}{\relax\ifhmode\unskip\space\fi MR }
\providecommand{\MRhref}[2]{%
  \href{http://www.ams.org/mathscinet-getitem?mr=#1}{#2}
}
\providecommand{\href}[2]{#2}
\begin{thebibliography}{}

\end{thebibliography}


\begin{thebibliography}{999}
\bibitem{AS} C. Alsina, B. Schweizer and A. Sklar, \textit{On the definition of a probabilistic normed space}, aequationes mathematicae,  Volume 46, Issue 1-2, (1993) 91-98.
\bibitem{Ba1} M. Bachir, \textit{The Inf-Convolution as a Law of Monoid. An analogue to the Banach-Stone theorem}, J. Math. Anal. Appl.  420, (2014) No. 1, 145-166.
%
\bibitem{Ba2} M. Bachir, \textit{A Banach-Stone type theorem for Invariant Metric Groups}, Topology Appl. 209 (2016) 189-197.
%
\bibitem{Ba3} M. Bachir, \textit{Representation of Isometric Isomorphisms between Monoids of Lipschitz Functions}, Methods Funct. Anal. Topology, (2017) (to apear)
\bibitem{C1} H. Y. Chu, \textit{On the Mazur-Ulam problem in linear 2-normed spaces}, J. Math. Anal. Appl., 327 (2007), 1041-1045.
\bibitem{C2} S. Cobzas, \textit{A Mazur-Ulam theorem for probabilistic normed spaces}, Aequationes Math., 77 (2009), 197-205
\bibitem{C3} S. Cobzas, \textit{Completeness with respect to the probabilistic Pompeiy-Hausdorff metric} Studia Univ. "Babes-Bolyai", Mathematica, Volume LII, Number 3, (2007) 43-65
\bibitem{HP} O. Had\v{z}i\'c and E. Pap \textit{Fixed Point Theory in Probabilistic Metric Space}, vol. 536 of Mathematics and Its Applications, Kluwer Academic Publishers, Dordrecht, The Netherlands, 2001. 
\bibitem{HP1} O. Had\v{z}i\'c and E. Pap \textit{On the Local Uniqueness of the Fixed Point of the Probabilistic q-Contraction in Fuzzy Metric Spaces} Published by Faculty of Sciences and Mathematics, University of Ni\v{s}, Serbia (2017), 3453-3458, https://doi.org/10.2298/FIL1711453H
\bibitem{K} V.L. Klee, \textit{Invariant metrics in groups (solution of a problem of Banach)}, Proc. Am. Math. Soc. 3 (1952) 484-487.
\bibitem{LL} B. Lafuerza Guill´en, J. A. Rodriguez Lallena, and C. Sempi, \textit{Completion of probabilistic
normed spaces}, Internat. J. Math. Math. Sci. 18 (1995), 649-652.
\bibitem{MS} E. Mac Shane, \textit{Extension of range of functions}, Bull. A.M.S. 40(12): (1934) 837-842.
\bibitem{M1} K. Menger, \textit{Statistical metrics}, Proc. Nat. Acad. of Sci. U.S.A. 28 (1942), 535-537.
\bibitem{M2} K. Menger, \textit{Probabilistic geometry}, Pro. Nat. Acad. of Sci. U.S.A. 37 (1951), 226-229.
\bibitem{M3} K. Menger, \textit{G\'eom\'etrie g\'en\'erale} (Chap. VII), Memorial des Sciences Mathematiques,
No. 124, Paris 1954. 
\bibitem{NP}  K. Nourouzi, A. R. Pourmoslemi \textit{Probabilistic Normed Groups}, Iranian Journal of Fuzzy Systems Vol. 14, No. 1, (2017) 99-113 
\bibitem{SS} B. Schweizer and A. Sklar,  \textit{Probabilistic metric spaces}, North-Holland Series in Probability and Applied Mathematics, North-Holland Publishing Co., New York, 1983.
\bibitem{SS0} B. Schweizer and A. Sklar, \textit{Espaces metriques aleatoires}, C. R. Acad. Sci., Paris
247 (1958), 2092-2094.
\bibitem{SS1} B. Schweizer, A. Sklar \textit{Statistical metric spaces}, Pacific. J. Math. 10, (1960) 313-334.
\bibitem{SS2} B. Schweizer, A. Sklar \textit{Triangle Inequalities in a Class of Statistical Metric Spaces}, J. London
Math. Soc. 38 (1963), 401-406.
\bibitem{SS3} B. Schweizer, A. Sklar and E. Thorp, \textit{The metrization of statistical metric svaces}, Pacific J. Math. 10 (1960), 673-675.
\bibitem{SP} M. H. Shaabani and L. Pourmemar \textit{On the definition of a probabilistic normed group}  Journal of Interdisciplinary Mathematics 20, (2017), 383-395.
\bibitem{SHE} H. Sherwood \textit{On E-Spaces and their Relation to Other Classes of Probabilistic Metric Spaces} J. London
Math. Soc. s1-44, (1969) 441-448 
\bibitem{SHE1} H. Sherwood \textit{Complete probabilistic metric spaces} Z. Wahrscheinlichkeitstheorie und Verw.
Gebiete 20, (1971) 117-128.
\bibitem{DS} D. A. Sibley \textit{A metric for weak convergence of distribution functions}, Rocky Mountain J. Math.
l (l971) 427-430.
\end{thebibliography}

\end{document}